\DeclareSymbolFont{cyrletters}{OT2}{wncyr}{m}{n}
\DeclareMathSymbol{\Sha}{\mathalpha}{cyrletters}{"58}
\theoremstyle{plain}
\newtheorem{theorem}{Theorem}[section]
\newtheorem*{theorem*}{Theorem}
\newtheorem{corollary}[theorem]{Corollary}
\newtheorem{lemma}[theorem]{Lemma}
\newtheorem{proposition}[theorem]{Proposition}
\newtheorem*{conjecture*}{Conjecture}
\theoremstyle{definition}
\theoremstyle{remark}
\newtheorem{remark}{Remark}
\newtheorem{remarks}{Remarks}
\newtheorem*{remark*}{Remark}
\newtheorem*{remarks*}{Remarks}
\numberwithin{equation}{section}
\newcommand*{\rom}[1]{\expandafter\@slowromancap\romannumeral #1@}
\def\({\left(}
\def\){\right)}
\newcommand{\la}{\lambda}
\begin{document}

\title{An overpartition analogue of the $q$-binomial coefficients}

\author{Jehanne Dousse}
\address{LIAFA, Universite Paris Diderot - Paris 7, 75205 Paris Cedex 13, FRANCE}
\email{jehanne.dousse@liafa.univ-paris-diderot.fr}

 \author{Byungchan Kim}
\address{School of Liberal Arts \\ Seoul National University of Science and Technology \\ 232 Gongreung-ro, Nowon-gu, Seoul,139-743, Korea}
\email{bkim4@seoultech.ac.kr}

\subjclass[2010] {05A17, 11P81, 11P84}

\date{\today}
\thanks{This research was supported by the International Research \& Development Program of the National Research Foundation of Korea (NRF) funded by the Ministry of Education, Science and Technology(MEST) of Korea (NRF-2014K1A3A1A21000358), and the STAR program number 32142ZM}

\begin{abstract}
We define an overpartition analogue of Gaussian polynomials (also known as $q$-binomial coefficients) as a generating function for the number of overpartitions fitting inside the $M \times N$ rectangle. We call these new polynomials over Gaussian polynomials or over $q$-binomial coefficients. We investigate basic properties and applications of over $q$-binomial coefficients. In particular, via the recurrences and combinatorial interpretations of over $q$-binomial coefficients, we prove a Rogers-Ramaujan type partition theorem.
\end{abstract}

\maketitle

\section{introduction}
 Gaussian polynomial is defined by 
\[
 G(M,N) ={M+N \brack N}_q = \frac{ (q)_{M+N} }{(q)_{M} (q)_{N}},
\]
where $(a)_{n} = (a;q)_{n} := \prod_{k=1}^{n} (1-aq^{k-1} )$ for $n \in \mathbb{N}_{0} \cup \{ \infty \}$. These polynomials (also known as $q$-binomial coefficients) have played many roles in combinatorics and number theory.  For example, Gaussian polynomials serve as generating functions for the number of inversions in permutations of a multi-set, the number of restricted partitions and the number of $N$ dimensional subspaces of $M+N$ dimensional vector spaces over $\mathbb{F}_q$.

 Our interest in this paper is to study an overpartition analogue of Gaussian polynomials. Recall $G(M,N)$ is the generating function for the number of partitions of $n$ fitting inside an $M \times N$ rectangle, i.e. with largest part $\le M$ and number of parts $\le N$. (For example, see \cite{Abook}.) In this light, we define our overpartition analogue of Gaussian polynomials, which we will call \emph{over $q$-binomial coefficients}, as the generating function for the number of overpartitions fitting inside an $M \times N$ rectangle. An overpartition \cite{LC} is a partition in which the last occurrence of each distinct number may be overlined. For example,  the $8$ overpartitions of $3$ are
\begin{equation*}
\begin{gathered}
3, \overline{3}, 2 + 1, \overline{2} +1, 2 + \overline{1}, \overline{2} + \overline{1}, 1+1+1, 1+1+\overline{1}.
\end{gathered}
\end{equation*}
 Overpartitions have recently played an important role in the construction of weight 3/2 mock modular forms \cite{BL} and in the combinatorial proof of the $_1 \psi_1$ summation formula \cite{Yee}, and their arithmetic properties have been of great interest \cite{acko, BL2, Ma, Tre}. 

 Our first result is an expression for over $q$-binomial coefficients $\overline{{M+N \brack N}}_q$. 
 
\begin{theorem} \label{overGthm}
For positive integers $M$ and $N$, 
\[
\overline{{M+N \brack N}}_q = \sum_{k=0}^{\min\{M,N\}} q^{\frac{k(k+1)}{2}} \frac{(q)_{M+N-k}}{(q)_k(q)_{M-k}(q)_{N-k}}.
\]
\end{theorem}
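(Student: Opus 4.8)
The plan is to set up a two–variable generating function for overpartitions inside an $M\times N$ box, find a $q$-difference equation ("Pascal-type" recurrence) for it, and then verify that the proposed closed form satisfies the same recurrence and the same initial conditions. Concretely, write $\overline{G}(M,N)=\overline{{M+N\brack N}}_q$ for the generating function counting overpartitions with largest part $\le M$ and at most $N$ parts. First I would derive a recurrence by conditioning on the largest part of the overpartition: either there is no part equal to $M$, or there is at least one part equal to $M$ and it is either overlined or not. Removing a column of height equal to the number of parts, or peeling off the part $M$, should yield something like
\[
\overline{G}(M,N)=\overline{G}(M-1,N)+q^{N}\overline{G}(M,N-1)+q^{N}\overline{G}(M-1,N-1),
\]
together with the boundary values $\overline{G}(M,0)=\overline{G}(0,N)=1$ (the only overpartition fitting in a degenerate box is the empty one). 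Getting the precise form of this recurrence — in particular the $q^{N}$-powers and the extra "overline" term $q^{N}\overline{G}(M-1,N-1)$ — is the combinatorial heart of the argument, and I expect that to be the main obstacle: one must be careful about which occurrence gets overlined when a largest part is deleted, so that the map between overpartitions in the $M\times N$ box and those in the smaller boxes is genuinely a bijection.

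Next I would check that the right-hand side of the theorem,
\[
R(M,N):=\sum_{k=0}^{\min\{M,N\}} q^{\frac{k(k+1)}{2}}\,\frac{(q)_{M+N-k}}{(q)_k(q)_{M-k}(q)_{N-k}},
\]
satisfies the very same recurrence and boundary conditions. The boundary check is immediate: when $N=0$ only the $k=0$ term survives and equals $(q)_M/(q)_0(q)_M = 1$, and similarly for $M=0$. For the recurrence, I would substitute the sum into $R(M-1,N)+q^N R(M,N-1)+q^N R(M-1,N-1)$, reindex so that every term is a multiple of $q^{k(k+1)/2}(q)_{M+N-1-k}/\big((q)_k(q)_{M-k}(q)_{N-k}\big)$ or a shifted analogue, and collect coefficients. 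The key algebraic identity needed is the standard one-line manipulation $(q)_{a}=(q)_{a-1}(1-q^{a})$ applied to split $(q)_{M+N-k}=(1-q^{M+N-k})(q)_{M+N-1-k}$; after this, the three pieces should recombine termwise (with a possible shift $k\mapsto k-1$ contributing the $q^{k(k+1)/2}$ versus $q^{k(k-1)/2}$ discrepancy, which is exactly absorbed by the $q^N$ factors) into $R(M,N)$. This is routine but slightly delicate bookkeeping; I would organize it by fixing $k$ and showing the coefficient identity $q^{k(k+1)/2}$-term by $q^{k(k+1)/2}$-term.

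An alternative, possibly cleaner, route is to recognize $R(M,N)$ as a known $q$-hypergeometic sum. Using $(q)_{M+N-k}/\big((q)_{M-k}(q)_{N-k}\big)$ and the $q$-Vandermonde / $q$-Chu–Vandermonde identity, the sum over $k$ can be rewritten as a ${}_2\phi_1$ evaluated at a special argument, and one may be able to quote a Heine-type transformation to recognize it directly as the overpartition box generating function $\sum \overline{p}(n\mid \text{fits in }M\times N)\,q^n$. However, since the problem explicitly advertises "recurrences and combinatorial interpretations," I expect the intended proof is the first one: establish the combinatorial recurrence for $\overline{G}(M,N)$, then verify $R(M,N)$ obeys it. The only real work is the combinatorial recurrence; everything after that is induction on $M+N$ plus the elementary $(q)$-factor splitting, so I would present the recurrence derivation carefully and treat the algebraic verification as a short induction step.
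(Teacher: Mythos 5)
Your overall strategy (find a Pascal-type recurrence for the box generating function, then show the closed form satisfies the same recurrence and boundary values, by induction on $M+N$) is viable, but the recurrence you wrote down is not correct, and you yourself flag it as the heart of the argument. Peeling off a largest part equal to $M$ must cost $q^M$, not $q^N$: the correct statements are
\[
\overline{{M+N \brack N}} = \overline{{M+N-1 \brack N}} + q^M \overline{{M+N-1 \brack N-1}} + q^M \overline{{M+N-2 \brack N-1}},
\qquad
\overline{{M+N \brack N}} = \overline{{M+N-1 \brack N-1}} + q^N \overline{{M+N-1 \brack N}} + q^N \overline{{M+N-2 \brack N-1}},
\]
i.e.\ the term taken with no $q$-power and the term weighted by the power must be paired the other way around from what you wrote. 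Concretely, at $M=2$, $N=1$ the true generating function is $1+2q+2q^2$, while your recurrence gives $(1+2q)+q+q=1+4q$. The subtle point in the overlined case is that only the last occurrence of a part may be overlined, so ``largest part $=M$ and overlined'' forces exactly one part equal to $M$, which is why that case lands in the $(M-1)\times(N-1)$ box with weight $q^M$. Once the recurrence is corrected (these are exactly the paper's Theorem~\ref{PropPascal}, proved there both combinatorially and analytically), the rest of your plan does go through: the boundary values are as you say, and the verification that the proposed sum satisfies the recurrence is most cleanly done by writing each summand as a $q$-trinomial coefficient $q^{k(k+1)/2}{M+N-k \brack k, M-k, N-k}$ and using the trinomial Pascal recurrence (equivalently, your $(q)_a=(1-q^a)(q)_{a-1}$ splitting organized per $k$).

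Note, however, that this is a genuinely different and longer route than the paper's proof of Theorem~\ref{overGthm}, which needs no recurrence or induction at all: classify overpartitions in the $M\times N$ box by the number $k$ of overlined parts. The overlined parts form a partition into $k$ distinct parts $\le M$, generated by $q^{k(k+1)/2}{M \brack k}$ (staircase $(k,k-1,\dots,1)$ plus a partition in a $(M-k)\times k$ box), and the non-overlined parts form an arbitrary partition in an $M\times(N-k)$ box, generated by ${M+N-k \brack N-k}$; the product is exactly the $k$-th summand. The direct decomposition both proves the theorem in a few lines and explains the shape of each term, whereas your route establishes the identity only as the unique solution of a recurrence; on the other hand, the recurrence you would prove along the way is itself the paper's Theorem~\ref{PropPascal} and is what powers the Rogers--Ramanujan-type application later, so it is worth proving — just not needed for this statement.
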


\begin{remarks}
\begin{enumerate}
\item The above expression can be rewritten by employing $q$-trinomial coefficients 
\[
{a+b+c \brack a,b,c}_q = \frac{ (q)_{a+b+c}}{(q)_a (q)_b (q)_c}.
\]
\item We have an obvious symmetry 
\[
\overline{{M+N \brack N}}_{q} = \overline{{M+N \brack M}}_{q}.
\] 
\item We will omit $q$ from the notation if it is clear from the context that the base is $q$.
\end{enumerate}
\end{remarks}

For example, from Theorem \ref{overGthm} we find that
\[
\overline{{6 \brack 3}} = 1 + 2q + 4q^2 + 8q^3 + 10q^4 + 12q^5 + 12q^6 + 8q^7 + 4q^8 +2q^9,
\]
and we can check that there are 12 overpartitions of $5$ fitting inside a $3 \times 3$ rectangle as follows.
\[
\begin{gathered}
3+2, \overline{3} +2, 3 + \overline{2}, \overline{3}+\overline{2}, 3+1+1, \overline{3}+1+1, 3+1+\overline{1} \\
\overline{3}+1+\overline{1}, 2+2+1, 2+\overline{2}+1, 2+2+\overline{1}, 2+\overline{2}+\overline{1}.
\end{gathered}
\]

Just as $q$-binomial coefficients satisfy simple recurrences, which are the $q$-analogues of Pascal's identity 
\begin{align*}
{M+N \brack N} &= {M+N-1 \brack N}+ q^M {M+N-1 \brack N-1}, \\
{M+N \brack N} &= {M+N -1  \brack N -1}+ q^N {M+N-1 \brack N},
\end{align*}
over $q$-binomial coefficients also satisfy similar recurrences.

\begin{theorem} \label{PropPascal}
For positive integers $M$ and $N$, we have
\begin{enumerate}
\item \begin{equation}
\label{pa1}
\overline{{M+N \brack N}} = \overline{{M+N -1\brack N-1}} +q^N \overline{{M+N-1 \brack N}} + q^N \overline{{M+N-2 \brack N-1}}.
\end{equation}
\item 
\begin{equation}
\label{pa2}
\overline{{M+N \brack N}} = \overline{{M+N-1 \brack N}} + q^M \overline{{M+N-1 \brack N-1}} + q^M \overline{{M+N-2 \brack N-1}}.
\end{equation}
\end{enumerate}
\end{theorem}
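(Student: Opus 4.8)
The plan is to prove \eqref{pa2} bijectively, by sorting the overpartitions counted by $\overline{{M+N\brack N}}$ according to how the maximal admissible part $M$ occurs, and then to deduce \eqref{pa1} from \eqref{pa2} together with the symmetry recorded after Theorem~\ref{overGthm}.

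Write $\mathcal{O}(M,N)$ for the set of overpartitions with largest part $\le M$ and at most $N$ parts (an overlined part counting as a part), so that $\overline{{M+N\brack N}}=\sum_{\lambda\in\mathcal{O}(M,N)}q^{|\lambda|}$. First I would partition $\mathcal{O}(M,N)$ into three classes according to the behaviour of the part $M$: (A) $M$ does not occur; (B) $M$ occurs with at least one non-overlined copy; (C) $M$ occurs exactly once, namely as $\overline{M}$. These are pairwise disjoint and exhaustive, the empty overpartition sitting in (A). Class (A) is literally $\mathcal{O}(M-1,N)$, contributing $\overline{{M+N-1\brack N}}$. For (B), deleting one non-overlined copy of $M$ gives an element of $\mathcal{O}(M,N-1)$, and conversely adjoining one non-overlined copy of $M$ to any member of $\mathcal{O}(M,N-1)$ returns to class (B); these maps are mutually inverse and change the size by $M$, so (B) contributes $q^{M}\overline{{M+N-1\brack N-1}}$. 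For (C), deleting $\overline{M}$ yields an overpartition with at most $N-1$ parts and, since no further copy of $M$ survives, with largest part $\le M-1$, i.e. an element of $\mathcal{O}(M-1,N-1)$; adjoining $\overline{M}$ is the inverse, and the size again drops by $M$, so (C) contributes $q^{M}\overline{{M+N-2\brack N-1}}$. Summing the three contributions is \eqref{pa2}. Degenerate boxes (such as $M-1=0$ or $N-1=0$) cause no problem, since $\overline{{a\brack 0}}=1$ counts only the empty overpartition.

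To obtain \eqref{pa1}, I would invoke the symmetry $\overline{{M+N\brack N}}_q=\overline{{M+N\brack M}}_q$ (which is clear from the $M\leftrightarrow N$ symmetry of the closed form in Theorem~\ref{overGthm}). Applying \eqref{pa2} with $M$ and $N$ interchanged gives
\[
\overline{{M+N\brack M}}=\overline{{M+N-1\brack M}}+q^{N}\overline{{M+N-1\brack M-1}}+q^{N}\overline{{M+N-2\brack M-1}},
\]
and rewriting each over $q$-binomial coefficient via $\overline{{a+b\brack b}}=\overline{{a+b\brack a}}$ turns the right-hand side into $\overline{{M+N-1\brack N-1}}+q^{N}\overline{{M+N-1\brack N}}+q^{N}\overline{{M+N-2\brack N-1}}$, which is \eqref{pa1}. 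One could instead prove \eqref{pa1} directly by the analogous trichotomy, sorting by the number of parts and by whether the smallest part survives subtracting $1$ from every part, but the overline on the smallest part makes the bijections slightly fussier, so routing through \eqref{pa2} is cleaner.

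The only genuine obstacle is the bookkeeping with overlines in the bijections for (B) and (C): one must check that adjoining a non-overlined $M$ to an overpartition that already contains $\overline{M}$ is still a legal overpartition — it is, because only the number of non-overlined copies changes — and that the three classes really do exhaust $\mathcal{O}(M,N)$. A fully mechanical alternative, avoiding bijections, is to substitute the formula of Theorem~\ref{overGthm} into both sides, clear denominators, and, after using $\tfrac{1-q^{k}}{(q)_{k}}=\tfrac{1}{(q)_{k-1}}$ together with a shift of the summation index, verify that the two remaining sums agree; this works but is much less transparent.
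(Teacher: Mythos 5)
Your proof is correct. For \eqref{pa2} your trichotomy (no part equal to $M$; at least one non-overlined copy of $M$, of which you delete one; a unique $\overline{M}$, which you delete) is essentially identical to the paper's combinatorial argument, which phrases the same decomposition as: $O(M,N,n)-O(M-1,N,n)$ counts overpartitions in the box with largest part exactly $M$, split according to whether that largest part is overlined, and removes one copy of $M$ in each case. Where you genuinely diverge is \eqref{pa1}: the paper proves it directly and independently of \eqref{pa2}, by observing that $O(M,N,n)-O(M,N-1,n)$ counts overpartitions with exactly $N$ parts and subtracting $1$ from every part, with a case distinction on whether the smallest part is $\overline{1}$ (if so the image fits in the $(M-1)\times(N-1)$ box, otherwise in the $(M-1)\times N$ box); you instead deduce \eqref{pa1} from \eqref{pa2} together with the symmetry $\overline{{M+N\brack N}}=\overline{{M+N\brack M}}$, which is legitimate here since Theorem~\ref{overGthm} is proved beforehand and its closed form is manifestly symmetric in $M$ and $N$ (conjugation of overpartitions gives the same symmetry bijectively). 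Your route is shorter and sidesteps the fussier overline bookkeeping for the smallest part, at the cost of making \eqref{pa1} depend on the symmetry rather than being a self-contained ``remove the first column'' bijection; note that the paper reuses precisely that first-column argument later (in Section~\ref{apps}, for the recurrence for $\overline{p}_N(n,k)$), so having it explicitly is of independent value. The paper also records a second, analytic proof via the recurrence for $q$-trinomial coefficients applied to the expansion of Theorem~\ref{overGthm}, which is the systematic version of the mechanical alternative you sketch at the end.
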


By employing over $q$-binomial coefficients, we can establish various identities. We discuss these applications in Section \ref{apps}. Here, we highlight that over $q$-binomial coefficients can be used to derive a Rogers-Ramanujan type theorem for overpartitions. The first Rogers-Ramanujan identity is given by
\[
\sum_{n=0}^{\infty} \frac{q^{n^2}}{(q)_{n}} = \frac{1}{(q;q^5)_{\infty}(q^4;q^5)_{\infty}}.
\]
The left-hand side can be interpreted as the generating function for partitions with a gap $\geq 2$ between two successive parts, and the right-hand side as the generating function for partitions into parts $\equiv 1, 4 \pmod{5}$.  Rogers-Ramanujan identities have been proved via various methods. Among them, one of the most elementary and beautiful is a proof by Andrews which uses recurrence relations of $q$-binomial coefficients \cite{A1, A2}.  Motivated by this proof, we find a Rogers-Ramanujan type identity for overpartitions. Before stating the result, we define three partition functions. Let $A(n)$ be the number of overpartitions $\lambda_1 + \dots + \lambda_{\ell}$ of $n$ satisfying the following gap conditions.
\[
\la_i - \la_{i+1} \geq
\begin{cases}
1, &\text{if $\la_i$ is not overlined,}\\
2, &\text{if $\la_i$ is overlined.}
\end{cases}
\]
(If there are $\ell$ parts in the overpartition, we define $\la_{\ell+1} = 0$ for convenience, thus $\overline{1}$ cannot be a part.) We define $B(n)$ as the number of overpartitions of $n$ with non-overlined parts $\equiv 2 \pmod{4}$ and $C(n)$ as the number of partitions into parts $\not\equiv 0 \pmod{4}$, i.e. the number of $4$-regular partitions of $n$.

\begin{theorem} \label{RRthm}
For all non-negative integers $n$, 
\[
A(n) = B(n) = C(n).
\]
\end{theorem}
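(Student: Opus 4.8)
The plan is to get $B(n)=C(n)$ from a one-line product manipulation and $A(n)=C(n)$ from an Andrews-type argument resting on the recurrence \eqref{pa2} and the combinatorial meaning of the over $q$-binomial coefficients. For the first equality: the generating function for $B$ is $(-q;q)_\infty/(q^2;q^4)_\infty$ (overlined parts distinct, non-overlined parts $\equiv 2\pmod 4$) and that for $C$ is $(q^4;q^4)_\infty/(q;q)_\infty$; since $(q;q)_\infty(-q;q)_\infty=(q^2;q^2)_\infty$ and $(q^2;q^2)_\infty=(q^2;q^4)_\infty(q^4;q^4)_\infty$, both equal $(q^4;q^4)_\infty/(q;q)_\infty$, so $B(n)=C(n)$.

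For $A(n)=C(n)$, let $\overline d_N=\overline d_N(q)$ be the generating function for the overpartitions counted by $A$ all of whose parts are $\le N$. A non-overlined part already forces a gap $\ge 1$, so every overpartition counted by $A$ has strictly decreasing parts; hence when its parts are $\le N$ it has at most $N$ of them, $\overline d_N$ is a polynomial, and $\sum_n A(n)q^n=\lim_{N\to\infty}\overline d_N$ coefficientwise. The key step is the identity
\[
\overline d_N=\sum_{\ell\ge 0}q^{\ell(\ell+1)/2}\,\overline{{N\brack\ell}},
\]
which I would prove bijectively: given an overpartition counted by $A$ with exactly $\ell$ parts $\lambda_1>\cdots>\lambda_\ell\ge 1$ and $\lambda_1\le N$, subtract the staircase $(\ell,\ell-1,\dots,1)$ to obtain $\mu_i=\lambda_i-(\ell-i+1)$, a partition with at most $\ell$ parts and largest part $\le N-\ell$, i.e. one fitting inside the $(N-\ell)\times\ell$ rectangle. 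A part $\lambda_i$ is allowed to be overlined exactly when $\lambda_i-\lambda_{i+1}\ge 2$ (with $\lambda_{\ell+1}=0$), which is equivalent to $\mu_i>\mu_{i+1}$ (setting $\mu_{\ell+1}:=0$), i.e. to $\mu_i$ being the last part of $\mu$ equal to its value — precisely the positions where overlining is allowed in the overpartition $\mu$; so the admissible overlinings of $\lambda$ match those of $\mu$ inside the rectangle, while the weight has dropped by $\ell+(\ell-1)+\cdots+1$. (In particular, $\overline 1$ being forbidden corresponds to $\mu_\ell=0$ not being a genuine part.) Summing over $\ell$ and unwinding the definition of $\overline{{N\brack\ell}}$ gives the identity.

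Feeding this into \eqref{pa2} applied to $\overline{{N\brack\ell}}$ (with $M=N-\ell$) and reindexing the two shifted sums, the three pieces collapse to
\[
\overline d_N=(1+q^N)\overline d_{N-1}+q^N\overline d_{N-2},
\]
the overpartition analogue of Andrews's recurrence for the Rogers--Ramanujan polynomials. To finish, let $N\to\infty$ in the displayed identity: by Theorem \ref{overGthm}, $\overline{{N\brack\ell}}\to\sum_{k=0}^{\ell}\frac{q^{k(k+1)/2}}{(q)_k(q)_{\ell-k}}=(-q;q)_\ell/(q)_\ell$, the generating function for overpartitions with at most $\ell$ parts, and for each power of $q$ only finitely many $\ell$ contribute, so
\[
\sum_n A(n)q^n=\sum_{\ell\ge 0}\frac{q^{\ell(\ell+1)/2}(-q;q)_\ell}{(q)_\ell}.
\]
By Lebesgue's identity $\sum_{\ell\ge 0}q^{\ell(\ell+1)/2}(a;q)_\ell/(q;q)_\ell=(-q;q)_\infty(aq;q^2)_\infty$ with $a=-q$, the right side equals $(-q;q)_\infty(-q^2;q^2)_\infty=(q^4;q^4)_\infty/(q;q)_\infty=\sum_n C(n)q^n$, whence $A(n)=C(n)$.

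The main obstacle is this last evaluation: passing from the explicit series to the infinite product is the genuine Rogers--Ramanujan-type content and is not a matter of bookkeeping. I would handle it by quoting Lebesgue's identity, or equivalently by solving the $q$-difference equation $\overline d(x)=(1+xq)\,\overline d(xq)+xq^2\,\overline d(xq^2)$ satisfied by the refined generating function $\overline d(x)=\sum_{\ell\ge 0}x^\ell q^{\ell(\ell+1)/2}(-q;q)_\ell/(q)_\ell$ that records the number of parts by $x$; this $q$-difference equation is the functional form of Lebesgue's identity. A secondary, more routine, point to pin down is the verification of the bijection above, in particular the role of the smallest part and of the entries of $\mu$ equal to $0$.
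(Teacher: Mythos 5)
Your proposal is correct, but it diverges from the paper at the decisive step. The combinatorial half matches what the paper leaves implicit: your staircase bijection giving $\overline d_N=\sum_{\ell\ge 0}q^{\ell(\ell+1)/2}\,\overline{{N\brack \ell}}$ is precisely the ``basic partition theoretic interpretation'' invoked in the last line of the paper's proof, and your recurrence $\overline d_N=(1+q^N)\overline d_{N-1}+q^N\overline d_{N-2}$ is the $x=1$ shadow of Lemma~\ref{recoverg}; the $B(n)=C(n)$ product manipulation is the same as the paper's appeal to Euler's theorem. Where you differ is in converting $\sum_{\ell\ge 0}q^{\ell(\ell+1)/2}(-q)_\ell/(q)_\ell$ into a product: you quote Lebesgue's identity, whereas the paper deliberately avoids quoting any summation formula and instead proves the polynomial identity of Theorem~\ref{keyobprop}, by introducing the companion series $\overline C(N,x)$ and showing (Lemmas~\ref{recoverg} and~\ref{recC}) that $\tfrac{(xq)_N}{(-xq)_N}\overline D(N,x)$ and $\overline C(N,x)$ satisfy the same recurrence up to terms in $x^2q^{N+3}\mathbb{Z}[[x,q]]$, then letting $N\to\infty$, setting $x=1$, and finishing with the Jacobi triple product. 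Your route is shorter and logically sound (Lebesgue's identity is classical and independent of the theorem, which the paper itself notes is a known special case of Lovejoy's result), but it outsources exactly the Rogers--Ramanujan-type content that the paper's recurrence method is designed to produce; the paper's route is longer but yields the finite refinement of Theorem~\ref{keyobprop} and a proof whose only external input is the triple product, which is its stated methodological point.

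One caveat: your suggested alternative to citing Lebesgue --- ``solving the $q$-difference equation $\overline d(x)=(1+xq)\overline d(xq)+xq^2\overline d(xq^2)$'' --- is not sufficient on its own. That functional equation (the limit of Lemma~\ref{recoverg}) determines the series you already possess, but it does not by itself yield the infinite product; producing the product form is the nontrivial step the paper handles via $\overline C(N,x)$ and the triple product. So either retain the explicit citation of Lebesgue's identity or supply an argument of the paper's type for that evaluation.
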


\begin{remark}
This is a special case of \cite[Theorem 1.2]{L0}, which is generalized by Chen, Sang, and Shi \cite{CSS}. While the previous results are obtained by employing Bailey chain machinery, we use the recurrence formulas for over $q$-binomial coefficients.
\end{remark}

 The equality $B(n) = C(n)$ is clear from Euler's partition theorem (the number of partitions into odd parts equals the number of partitions into distinct parts), thus the important equality is $A(n)= B(n)$. Here we illustrate Theorem~\ref{RRthm} for the case $n=8$. There are 16 overpartitions satisfying the gap conditions:
\begin{equation*}
\begin{gathered}
8, \overline{8},  7+1, \overline{7}+1, 6+2, \overline{6}+2, 6+\overline{2}, \overline{6}+\overline{2}, 5+3,  \overline{5}+3, \\
5+\overline{3}, \overline{5}+\overline{3}, 5+2+1, \overline{5}+2+1, 4+3+1, 4+\overline{3}+1,
\end{gathered}
\end{equation*}
and there are also 16 overpartitions satisfying the congruence conditions:
\begin{equation*}
\begin{gathered}
\overline{8}, \overline{7}+\overline{1},\overline{6}+\overline{2},6+2, \overline{6}+2, 6+\overline{2}, \overline{5} + \overline{3} ,\overline{5}+\overline{2}+\overline{1},\overline{5}+2+\overline{1}, \overline{4} + \overline{3}+\overline{1}, \overline{4} + 2+2,\\ \overline{4}+2+\overline{2}, \overline{3} + 2 + 2 + \overline{1},\overline{3} + 2 + \overline{2} + \overline{1},  2+2+2+2, 2+2+2+\overline{2}.
\end{gathered}
\end{equation*}

 The rest of paper is organized as follows. In Section 2, we prove Theorem \ref{overGthm} and the recurrence formulas of Theorem~\ref{PropPascal}. In Section \ref{apps}, we give several applications of over $q$-binomial coefficients. In Section 4, by using recurrence formulas we prove a Rogers-Ramaujan type identity for overpartitions.

\section{Basic Properties of over $q$-binomial coefficients}

We start with the proof of Theorem \ref{overGthm}. 

\begin{proof}[Proof of Theorem \ref{overGthm}]
Let $\overline{G}(M,N,k)$ be the generating function for overpartitions fitting inside an $M \times N$ rectangle and having exactly $k$ overlined parts. Such an overpartition can be decomposed as a partition into $k$ distinct parts, each of which is at most $M$, and a partition fitting inside an $M \times (N-k)$ box. By appending a partition fitting into an $(M-k) \times k$ box (generated by ${M \brack k}_q$) to the right of the staircase partition $(k,k-1,\ldots,1)$ (generated by $q^{\frac{k(k+1)}{2}}$) , we see that 
\[
q^{\frac{k(k+1)}{2}} {M \brack k}_q
\]
generates partitions into $k$ distinct parts $\le M$. As ${N+M-k \brack N-k}_q$ generates the partitions fitting inside $M \times (N-k)$ box, we see that
\[
\overline{G}(M,N,k) =  q^{\frac{k(k+1)}{2}} {M \brack k}_q {N+M-k \brack N-k}_q
= q^{\frac{k(k+1)}{2}} \frac{(q)_{M+N-k}}{(q)_k(q)_{M-k}(q)_{N-k}}.
\]
Since $\overline{G}(M,N,k)$ is non-zero if and only if $0 \leq k \leq \min\{M,N\}$, we have
\[
\overline{{N+M \brack N}}_q =\sum_{k=0}^{\min\{M,N\}} \overline{G}(M,N,k) = \sum_{k=0}^{\min\{M,N\}} q^{\frac{k(k+1)}{2}} \frac{(q)_{M+N-k}}{(q)_k(q)_{M-k}(q)_{N-k}}.
\]
\end{proof}

Now we turn to proving the recurrences. 
\begin{proof}[Combinatorial proof of Theorem~\ref{PropPascal}]
 Let $O(M,N,n)$ denote the number of overpartitions of $n$ fitting inside an $M \times N$ rectangle.  Note that $O ( M,N,n) - O( M,N-1,  n)$ is the number of overpartitions of $n$ fitting inside an $M \times N$ rectangle having exactly $N$ parts. 
Let $\la$ be such an overpartition. If the smallest part of $\la$ is $\overline{1}$, then by removing $1$ from every part we obtain an overpartition of $n-N$ fitting inside $(M-1) \times (N-1)$ rectangle. If the smallest part of $\la$ is different from $\overline{1}$, by removing $1$ from every part we arrive at an overpartition of $n-N$ fitting inside an $(M-1) \times N$ rectangle. Therefore, we find that 
\[
O ( M,N,n) - O( M,N-1,  n) =O(M - 1, N - 1, n - N) + O(M - 1,N, n - N) .
\]
By rewriting the above identity in terms of generating functions we obtain the first recurrence.

The second recurrence follows from a similar argument by tracking the size of the maximum part instead of the number of parts.
Note that $O ( M,N,n) - O( M-1,N,  n)$ is the number of overpartitions of $n$ fitting inside an $M \times N$ rectangle with largest part equal to $M$.
If the largest part is overlined, then by removing it we obtain an overpartition of $n-M$ fitting inside a $(M-1) \times (N-1)$ rectangle. If the largest part is not overlined, then by removing it we obtain an overpartition of$n-M$ fitting inside a $M \times (N-1)$ rectangle.
Therefore, we find that 
\[
O ( M,N,n) - O( M-1,N,  n) = O(M-1 ,N-1, n - M)+O(M, N - 1, n - M).
\]
By rewriting the above identity in terms of generating functions we obtain the second recurrence.
\end{proof}

\begin{proof}[Analytic Proof of Theorem \ref{PropPascal}]
We first note that $q$-trinomial coefficients satisfy the following recurrence.
\[
{a+b+c \brack a,b,c}= {a+b+c-1 \brack a-1,b,c} + q^a {a+b+c-1 \brack a,b-1,c} + q^{a+b} {a+b+c-1 \brack a,b,c-1}.
\]
Therefore, we find that
\begin{align*}
\overline{{M+N \brack N }} &= \sum_{k=0}^{\min \{M, N \} } q^{k(k+1)/2} \( {M+N-k-1 \brack N-k-1,k, M-k } + q^{N-k} { M+N-k-1 \brack N-k,k-1,M-k } \right. \\
&\left.  \qquad\qquad + q^{N} { M+N-k-1 \brack N-k,k,M-k-1} \) \\
&=\sum_{k=0}^{\min\{M,N-1\}} q^{k(k+1)/2} {M+N-k-1 \brack k, M-k, N-1 } \\
&\quad+ \sum_{k=0}^{\min\{M-1,N-1\}} q^{k(k+1)/2 + N} { M+N-k-2 \brack k, M-1-k,N-1-k } \\
&\quad + \sum_{k=0}^{\min\{M-1,N\}} q^{k(k+1)/2 + N} { M+N-k-1 \brack k,M-1-k,N-1} \\
&=\overline{{M+N -1 \brack N-1}}+ q^N \overline{{M+N -2 \brack N-1}}+ q^N \overline{{M+N -1 \brack N}},  
\end{align*}
where we have made a change of variable $k \rightarrow k+1$ in the second sum. The second recurrence can be proved similarly.
\end{proof}

Throughout the paper, we use the following asymptotic behaviour frequently.

\begin{proposition} \label{Njlim}
For a non-negative integer $j$,
\[
\lim_{N \rightarrow \infty} \overline{{N \brack j}}  = \frac{(-q)_{j}}{(q)_{j}}.
\]
\end{proposition}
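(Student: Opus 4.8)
The plan is to use the explicit formula from Theorem~\ref{overGthm}. Writing $M+N$ for the top entry there, the quantity $\overline{{N \brack j}}$ corresponds to taking a box with $j$ rows and $N-j$ columns, so by Theorem~\ref{overGthm} (with the roles chosen so that $\min = j$ once $N$ is large) we have
\[
\overline{{N \brack j}} = \sum_{k=0}^{j} q^{\frac{k(k+1)}{2}} \frac{(q)_{N-k}}{(q)_k (q)_{j-k}(q)_{N-j}}.
\]
The idea is to let $N\to\infty$ term by term. For each fixed $k$ with $0\le k\le j$, the ratio $(q)_{N-k}/(q)_{N-j}$ is a finite product $\prod_{i=N-j+1}^{N-k}(1-q^i)$, which tends to $1$ as $N\to\infty$ since every factor tends to $1$ and there are only $j-k$ of them. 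Hence each summand converges to $q^{k(k+1)/2}/\big((q)_k (q)_{j-k}\big)$, and since the sum has only $j+1$ terms we may pass to the limit to get
\[
\lim_{N\to\infty}\overline{{N \brack j}} = \sum_{k=0}^{j} \frac{q^{\frac{k(k+1)}{2}}}{(q)_k (q)_{j-k}} = \frac{1}{(q)_j}\sum_{k=0}^{j} q^{\frac{k(k+1)}{2}} {j \brack k}_q.
\]

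The remaining step is to identify this sum with $(-q)_j$. This is a standard $q$-series evaluation: the finite $q$-binomial theorem gives $\sum_{k=0}^{j} q^{\binom{k}{2}} {j \brack k}_q z^k = (-z;q)_j = \prod_{i=0}^{j-1}(1+zq^i)$, and substituting $z=q$ turns $q^{\binom{k}{2}}z^k = q^{\binom{k}{2}+k} = q^{k(k+1)/2}$ and $(-z;q)_j$ into $(-q;q)_j = (-q)_j$. Therefore $\sum_{k=0}^j q^{k(k+1)/2}{j\brack k}_q = (-q)_j$, and combining with the previous display yields $\lim_{N\to\infty}\overline{{N \brack j}} = (-q)_j/(q)_j$, as claimed.

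I do not expect any genuine obstacle here; the only points requiring a line of care are (i) checking that for $N\ge 2j$ the upper summation limit in Theorem~\ref{overGthm} really is $j$ and not something smaller, so that the formula displayed above is valid for all large $N$, and (ii) justifying the interchange of limit and (finite) sum, which is immediate. One could alternatively give a purely combinatorial argument: $\overline{{N\brack j}}$ counts overpartitions with at most $j$ parts, each part at most $N-j$; as $N\to\infty$ the bound on the part size disappears, leaving the generating function for overpartitions with at most $j$ parts, which is $(-q)_j/(q)_j$ (the factor $1/(q)_j$ for a partition into at most $j$ parts and $(-q)_j = (-q;q)_j$ recording which of the $j$ distinct part-sizes are overlined). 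Either route is short; I would present the analytic one for self-containedness and perhaps remark on the combinatorial interpretation.
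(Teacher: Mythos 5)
Your argument is correct, but it is not the paper's route. The paper's proof is exactly the one-line combinatorial observation you relegate to a closing remark: $\overline{{N\brack j}}$ generates overpartitions in an $(N-j)\times j$ box (equivalently, by symmetry, a $j\times(N-j)$ box), and letting $N\to\infty$ simply removes one of the two restrictions, leaving the generating function $(-q)_j/(q)_j$ for overpartitions with at most $j$ parts. Your main argument instead expands $\overline{{N\brack j}}$ via Theorem~\ref{overGthm}, passes to the limit termwise, and evaluates the resulting sum by the finite $q$-binomial theorem $\sum_{k} q^{\binom{k}{2}}{j\brack k}_q z^k=(-z;q)_j$ at $z=q$. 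This is valid and not circular, but note that the paper runs this logic in the opposite direction: it uses the proposition together with Theorem~\ref{overGthm} to \emph{derive} the identity $\sum_{k=0}^{j}q^{k(k+1)/2}/\bigl((q)_k(q)_{j-k}\bigr)=(-q)_j/(q)_j$, remarking afterwards that it is a special case of the finite $q$-binomial theorem; your version trades that self-contained combinatorial step for a citation to the classical summation. One slip to fix: after substituting $M=N-j$ into Theorem~\ref{overGthm} the summand is $q^{k(k+1)/2}\,(q)_{N-k}/\bigl((q)_k(q)_{j-k}(q)_{N-j-k}\bigr)$, not with $(q)_{N-j}$ in the denominator as you wrote (your displayed finite-$N$ identity is false as stated); the ratio that tends to $1$ is $(q)_{N-k}/(q)_{N-j-k}=\prod_{i=N-j-k+1}^{N-k}(1-q^i)$, a product of $j$ factors, and the limit computation then proceeds exactly as you describe, so the conclusion is unaffected.
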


\begin{proof}
When $N$ goes to the infinity, the restriction on the number of parts disappears. 
\end{proof}

Proposition~\ref{Njlim} is useful to obtain new identities.  For example, by taking a limit $N \to \infty$ in Theorem \ref{overGthm}, we find that
\[
\sum_{k=0}^{j} \frac{q^{k(k+1)/2}}{(q)_{k}(q)_{j-k}} = \frac{(-q)_{j}}{(q)_{j}},
\]
which gives an alternative generating function for overpartitions into parts $\leq j$. This identity is also a special case of  the finite $q$-binomial theorem \cite[Exer. 1.2(vi)]{GR}.

\section{Applications} \label{apps}

By tracking the number of parts in the overpartitions, we prove the following identity.
\begin{proposition}
For a positive integer $N$, 
\[
\frac{(-zq)_{N}}{(zq)_{N}} = 1+ \sum_{k \geq 1} z^k q^k \left( \overline{\left[ \begin{matrix} N+k-1 \\ k \end{matrix} \right] } + \overline{\left[ \begin{matrix} N+k-2 \\ k-1 \end{matrix} \right] }  \right).
\]
\end{proposition}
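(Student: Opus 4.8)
The plan is to give a combinatorial proof by interpreting $\frac{(-zq)_N}{(zq)_N}$ as a two-variable generating function for overpartitions with parts $\le N$, where $z$ tracks the number of parts. First I would recall (from the proof of Proposition~\ref{Njlim} and the remark following it, with the parameter $z$ reinstated) that
\[
\frac{(-zq)_N}{(zq)_N} = \sum_{\la} z^{\ell(\la)} q^{|\la|},
\]
the sum running over all overpartitions $\la$ with every part at most $N$, and $\ell(\la)$ the number of parts. This is the standard product formula: the factor $\frac{1+zq^j}{1-zq^j}$ for each $j \in \{1,\dots,N\}$ accounts for an arbitrary number of non-overlined copies of $j$ together with at most one overlined copy of $j$, each occurrence contributing $z$ to the exponent of $z$ and $j$ to the exponent of $q$.

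The constant term ($k=0$) corresponds to the empty overpartition, giving the $1$. For $k \ge 1$, I would group together all overpartitions with exactly $k$ parts. Given such a $\la = \la_1 \ge \la_2 \ge \cdots \ge \la_k \ge 1$, I subtract $1$ from every part; this peels off $q^k$ and leaves an overpartition $\mu$ with at most $k$ parts and largest part $\le N-1$. The subtlety is exactly the one already handled in the combinatorial proof of Theorem~\ref{PropPascal}: if $\la_k = \overline{1}$ then $\mu$ has at most $k-1$ parts (the overlined $1$ disappears entirely), hence fits inside an $(N-1)\times(k-1)$ rectangle and is generated by $\overline{{N+k-2 \brack k-1}}$; otherwise $\mu$ still fits inside an $(N-1)\times k$ rectangle and is generated by $\overline{{N+k-1 \brack k}}$. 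These two cases are mutually exclusive and exhaustive, and the map is reversible (add $1$ back to every part, and if the number of parts went down by one, restore $\overline{1}$ as a new smallest part), so summing over $k$ yields
\[
\frac{(-zq)_N}{(zq)_N} = 1 + \sum_{k \ge 1} z^k q^k \left( \overline{{N+k-1 \brack k}} + \overline{{N+k-2 \brack k-1}} \right),
\]
as claimed.

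I expect no serious obstacle here: the only point requiring care is making precise that the generating function $\overline{{N+k-1 \brack k}}$ counts overpartitions fitting inside a rectangle of height $k$ and width $N-1$ (so at most $k$ parts, each $\le N-1$), which is just the definition of the over $q$-binomial coefficient with the roles of $M$ and $N$ read off correctly, together with the symmetry in the Remarks after Theorem~\ref{overGthm}. Alternatively, one could give a purely algebraic proof: expand $\overline{{N+k-1 \brack k}} + \overline{{N+k-2 \brack k-1}}$ using Theorem~\ref{overGthm} and recognise, after collecting the $z^k q^k$ terms, the known $q$-series expansion of $\frac{(-zq)_N}{(zq)_N}$; but the combinatorial argument is shorter and mirrors the proof of Theorem~\ref{PropPascal} already given, so that is the route I would write up.
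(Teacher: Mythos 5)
Your argument is correct and is essentially the paper's own proof: both interpret $\frac{(-zq)_{N}}{(zq)_{N}}$ as the two-variable generating function for overpartitions into parts at most $N$ with $z$ tracking the number of parts, fix the number of parts $k$, subtract $1$ from every part, and split according to whether the smallest part is $\overline{1}$, exactly as in the combinatorial proof of Theorem~\ref{PropPascal}, identifying the two resulting classes with $\overline{\left[\begin{smallmatrix} N+k-1 \\ k \end{smallmatrix}\right]}$ and $\overline{\left[\begin{smallmatrix} N+k-2 \\ k-1 \end{smallmatrix}\right]}$. The only cosmetic point is that your description of the inverse map should say one pads with non-overlined $1$'s up to $k$ parts (resp.\ up to $k-1$ parts and then appends $\overline{1}$), since several non-overlined ones may disappear when subtracting $1$; this does not affect the validity of the argument.
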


\begin{proof}
Let $\overline{p}_{N} (n,k)$ be the number of overpartitions of $n$ into parts $\leq N$ with $k$ parts. Then, it is not hard to see that
\[
\frac{(-zq)_{N}}{(zq)_{N}}  = \sum_{n \geq 0} \sum_{k \geq 0} \overline{p}_{N} (n,k) z^k q^n .
\]

Let $\lambda$ be an overpartition counted by $\overline{p}_{N} (n,k)$. Discussing whether the smallest part of $\lambda$ is equal to $\overline{1}$ and removing $1$ from each part as in the proof of Theorem~\ref{PropPascal}, we have
$$\overline{p}_{N} (n,k) = \overline{p}_{N-1} (n-k,k) + \overline{p}_{N-1} (n-k,k-1).$$
Thus $$\sum_{n \geq 0}\overline{p}_{N} (n,k) q^n = q^k \left( \overline{\left[ \begin{matrix} N+k-1 \\ k \end{matrix} \right] } + \overline{\left[ \begin{matrix} N+k-2 \\ k-1 \end{matrix} \right] }  \right).$$
The claimed identity follows.
 \end{proof}

By taking the limit as $N \rightarrow \infty$ in the above proposition, we find the following generating function.
\begin{corollary}
Let $\overline{p} (n,k)$ be the number of overpartitions of $n$ with $k$ parts. Then,
\[
\sum_{n \geq 0} \sum_{k \geq 0} \overline{p} (n,k) z^k q^n =\frac{(-zq)_{\infty}}{(zq)_{\infty}} = 1+2 \sum_{k \geq 1} \frac{ z^k q^{k} (-q)_{k-1}}{(q)_{k}}.
\]
\end{corollary}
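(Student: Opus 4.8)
The plan is to deduce the Corollary from the Proposition by letting $N \to \infty$ on both sides. First I would observe that the left-hand side is immediate: as $N \to \infty$ the finite product $\frac{(-zq)_N}{(zq)_N} = \prod_{j=1}^N \frac{1+zq^j}{1-zq^j}$ converges (coefficientwise, and absolutely for $|q|,|z|<1$) to $\frac{(-zq)_\infty}{(zq)_\infty}$, which by the same combinatorial reading as in the Proposition's proof equals $\sum_{n,k} \overline{p}(n,k) z^k q^n$, since removing the bound $\le N$ on the parts turns $\overline{p}_N(n,k)$ into $\overline{p}(n,k)$.

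Next I would pass to the limit term by term on the right-hand side. By Proposition~\ref{Njlim}, $\lim_{N\to\infty} \overline{{N+k-1 \brack k}} = \frac{(-q)_k}{(q)_k}$ and likewise $\lim_{N\to\infty} \overline{{N+k-2 \brack k-1}} = \frac{(-q)_{k-1}}{(q)_{k-1}}$ (the shift of $N$ by a bounded amount is irrelevant in the limit). Hence the bracketed sum for each fixed $k\ge 1$ tends to
\[
\frac{(-q)_k}{(q)_k} + \frac{(-q)_{k-1}}{(q)_{k-1}} = \frac{(-q)_{k-1}}{(q)_k}\bigl( (1+q^k) + (1-q^k) \bigr) = \frac{2(-q)_{k-1}}{(q)_k},
\]
using $(-q)_k = (1+q^k)(-q)_{k-1}$ and $(q)_k = (1-q^k)(q)_{k-1}$. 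Substituting gives the claimed $1 + 2\sum_{k\ge1} \frac{z^k q^k (-q)_{k-1}}{(q)_k}$.

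The one point needing a word of care — and the main (mild) obstacle — is justifying the interchange of the limit $N\to\infty$ with the infinite sum over $k$ on the right-hand side. This is routine because everything is a formal power series in $q$ (and $z$): for each fixed exponent of $q$, only finitely many values of $k$ contribute, and for those $k$ the coefficient of $q^n$ in $\overline{{N+k-1 \brack k}}$ stabilizes once $N$ is large (indeed once $N \ge n$), so the coefficientwise limit is legitimate. Alternatively one can invoke dominated convergence in the region $|q| < 1$, $|z| < 1$, where all series converge absolutely. Either justification closes the argument.
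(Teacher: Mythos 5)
Your proposal is correct and is exactly the paper's approach: the paper obtains the corollary by letting $N \to \infty$ in the preceding proposition, using Proposition~\ref{Njlim} to evaluate the limits of the over $q$-binomial coefficients. Your additional algebra showing $\frac{(-q)_k}{(q)_k} + \frac{(-q)_{k-1}}{(q)_{k-1}} = \frac{2(-q)_{k-1}}{(q)_k}$ and the coefficientwise justification of the limit interchange simply make explicit the details the paper leaves unstated.
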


\begin{remark}
The above identity is a special case of the $q$-binomial theorem \cite[(II.3)]{GR}. 
\end{remark}

Note that
\[
\sum_{k \geq 1} \frac{ q^{k} (-q)_{k-1}}{(q)_{k}} \equiv \sum_{k \geq 1} \frac{ q^{k} }{1-q^k} = \sum_{n \geq 1} \tau (n) q^n  \pmod{2},
\]
where $\tau(n)$ is the number of divisors of $n$. This recovers a well known congruence. 
\begin{corollary}
For all non-negative integers $n$, 
\[
\overline{p}(n) \equiv 2 \tau(n) \pmod{4}.
\]
\end{corollary}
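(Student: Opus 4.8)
The claim to prove is the congruence $\overline{p}(n) \equiv 2\tau(n) \pmod 4$, and the plan is to read it off from the generating function identity established in the preceding corollary together with the divisor-sum observation displayed just before the statement.

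First I would recall that the preceding corollary gives
\[
\sum_{n\ge 0}\overline{p}(n)q^n = \frac{(-q)_\infty}{(q)_\infty} = 1 + 2\sum_{k\ge 1}\frac{q^k(-q)_{k-1}}{(q)_k}
\]
(set $z=1$ in the corollary, summing over $k$). Dividing by $2$ is not legitimate over $\Z$, so instead I would work modulo $4$ directly: it suffices to show that the coefficient of $q^n$ in $\sum_{k\ge 1}\frac{q^k(-q)_{k-1}}{(q)_k}$ is congruent to $\tau(n)$ modulo $2$ for every $n\ge 1$ (the constant term $1$ contributes only to $n=0$, where $\overline p(0)=1$ and $\tau$ is conventionally irrelevant; one checks $n=0$ separately, $\overline p(0)=1$, and the asserted congruence is usually stated for $n\ge 1$, or one adopts $\tau(0)$-free phrasing). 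Then multiplying by $2$ turns a mod-$2$ congruence of the inner series into a mod-$4$ congruence of $\overline p(n)$.

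The key step is the congruence already displayed in the excerpt, namely
\[
\sum_{k\ge 1}\frac{q^k(-q)_{k-1}}{(q)_k} \equiv \sum_{k\ge 1}\frac{q^k}{1-q^k} \pmod 2,
\]
whose justification I would spell out: modulo $2$ we have $(-q)_{k-1} = \prod_{j=1}^{k-1}(1+q^j) \equiv \prod_{j=1}^{k-1}(1-q^j) = (q)_{k-1} \pmod 2$, since $1+q^j$ and $1-q^j$ differ by $2q^j$; hence $\frac{q^k(-q)_{k-1}}{(q)_k} \equiv \frac{q^k(q)_{k-1}}{(q)_k} = \frac{q^k}{1-q^k} \pmod 2$ as formal power series in $\Z[[q]]$ (each $\frac{1}{(q)_k}$ being a genuine power series with integer coefficients, so the congruence of numerators passes to the quotients). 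Then $\sum_{k\ge 1}\frac{q^k}{1-q^k} = \sum_{k\ge 1}\sum_{m\ge 1}q^{km} = \sum_{n\ge 1}\tau(n)q^n$ is the classical Lambert-series identity for the divisor function. Combining, the coefficient of $q^n$ in $\sum_{k\ge 1}\frac{q^k(-q)_{k-1}}{(q)_k}$ is $\equiv \tau(n)\pmod 2$, so $\overline p(n) = 2\cdot[\text{that coefficient}] \equiv 2\tau(n)\pmod 4$ for $n\ge 1$.

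There is no real obstacle here; the only point requiring a little care is the passage from a mod-$2$ congruence between numerators to a mod-$2$ congruence between the rational functions $\frac{q^k(-q)_{k-1}}{(q)_k}$, which is valid because dividing by $(q)_k$ (or equivalently multiplying by the integer power series $1/(q)_k$) is a well-defined operation on $\Z[[q]]$ that commutes with reduction modulo $2$. Everything else is the cited Lambert-series expansion of $\tau(n)$ and substitution $z=1$ into the previous corollary, so the write-up can be kept to a few lines.
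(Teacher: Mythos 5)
Your proposal is correct and follows essentially the same route as the paper: substitute $z=1$ into the preceding corollary's generating function identity, reduce $\sum_{k\ge 1} q^k(-q)_{k-1}/(q)_k$ modulo $2$ to the Lambert series $\sum_{k\ge 1} q^k/(1-q^k)=\sum_{n\ge1}\tau(n)q^n$ (the paper displays exactly this congruence just before the corollary), and multiply by $2$ to pass from a mod-$2$ statement to the mod-$4$ congruence. Your added justification that $(-q)_{k-1}\equiv (q)_{k-1}\pmod 2$ and that division by $(q)_k$ commutes with reduction mod $2$ in $\Z[[q]]$ is exactly the detail the paper leaves implicit, so nothing further is needed.
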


 Our next application is finding an analogue of Sylvester's identity \cite{Syl}:
 \begin{align*}
 (-xq)_{N} = 1 + &\sum_{j \geq 1} \left[ \begin{matrix} N+1-j \\ j \end{matrix} \right] (-xq;q)_{j-1} x^{j} q^{3j(j-1)/2} \\
 &+\sum_{j \geq 1} \left[ \begin{matrix} N-j \\ j \end{matrix} \right] (-xq;q)_{j-1} x^{j+1} q^{3j(j+1)/2}.
 \end{align*}
 
 We define $\overline{S}(N;x;q)$ as
\[
\overline{S}(N;x,q) := 1 + \sum_{j \geq 1} \left(\overline{\left[ \begin{matrix} N-1 \\ j-1 \end{matrix} \right] } \frac{(-xq)_{j-1}}{(xq)_{j-1}} x^j q^{j^2} + \overline{\left[ \begin{matrix} N \\ j \end{matrix} \right] } \frac{(-xq)_{j}}{(xq)_{j}} x^j q^{j^2} \right).
\]
Then we have the following identity.

\begin{theorem} 
For a positive integer $N$, 
\[
\overline{S}(N;x;q)= \frac{(-xq)_{N}}{(xq)_{N}}.
\]
\end{theorem}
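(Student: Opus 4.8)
The plan is to prove the identity by induction on $N$, in the spirit of the classical proof of Sylvester's identity from the $q$-Pascal rule, using the recurrences of Theorem~\ref{PropPascal}. First note that, with the variable $x$ recording the number of parts, $\frac{(-xq)_N}{(xq)_N}=\sum_{n\ge 0}\sum_{k\ge 0}\overline p_N(n,k)\,x^kq^n$ is the generating function for overpartitions all of whose parts are $\le N$, as recalled in Section~\ref{apps}. Both this series and $\overline S(N;x;q)$ should satisfy the first-order recurrence $F(N)=\frac{1+xq^N}{1-xq^N}F(N-1)$ with $F(0)=1$; since $\frac{(-xq)_N}{(xq)_N}$ manifestly does, the whole problem reduces to proving
\[
(1-xq^N)\,\overline S(N;x;q)=(1+xq^N)\,\overline S(N-1;x;q),
\]
i.e.\ $\overline S(N)-\overline S(N-1)=xq^N\bigl(\overline S(N)+\overline S(N-1)\bigr)$ for $N\ge 1$; the base case $N=0$, and the check $\overline S(1;x;q)=\frac{1+xq}{1-xq}$, is immediate.

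To prove this recurrence I would expand $\overline S(N)-\overline S(N-1)$ summand by summand. Passing from $\overline S(N)$ to $\overline S(N-1)$ changes each over $q$-binomial $\overline{{N-1 \brack j-1}}$, $\overline{{N \brack j}}$ into $\overline{{N-2 \brack j-1}}$, $\overline{{N-1 \brack j}}$, i.e.\ lowers the ``largest part'' parameter by one with the number-of-parts parameter fixed, so recurrence~\eqref{pa2} is exactly the right tool: it gives $\overline{{N \brack j}}-\overline{{N-1 \brack j}}=q^{N-j}\bigl(\overline{{N-1 \brack j-1}}+\overline{{N-2 \brack j-1}}\bigr)$ and likewise for the other piece. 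After substituting these, shifting $j\mapsto j+1$ in the contribution of the $\overline{{N \brack j}}$-summands, and using $\frac{(-xq)_j}{(xq)_j}=\frac{1+xq^j}{1-xq^j}\cdot\frac{(-xq)_{j-1}}{(xq)_{j-1}}$ to align the two resulting sums over a common index, one is left with an identity in which the factors $\frac{(-xq)_{j-1}}{(xq)_{j-1}}$ cancel and only over $q$-binomial coefficients with lower entry $j-1$ together with powers of $q$ and $x$ remain; this residual identity is then closed by a further application of the recurrences of Theorem~\ref{PropPascal}. The two-term shape of $\overline S(N;x;q)$ has a clean combinatorial origin that motivates the calculation: dissecting an overpartition with parts $\le N$ about its Durfee square of size $j$ yields the $j\times j$ square (weight $q^{j^2}$, accounting for $j$ of the parts, hence $x^j$), the portion to the right of the square, which lies in a $j\times(N-j)$ box, and the portion below it, which has all parts $\le j$ and contributes $\frac{(-xq)_j}{(xq)_j}$ — while the summand with $\overline{{N-1 \brack j-1}}$ and $\frac{(-xq)_{j-1}}{(xq)_{j-1}}$ collects those overpartitions whose Durfee square has bottom row equal to $j$ bearing an overline, which is destroyed when the first $j$ columns are deleted.

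The step I expect to be the main obstacle is the telescoping in the inductive step. Because every summand already carries the infinite product $\frac{(-xq)_j}{(xq)_j}$, one cannot match coefficients of $x^j$ directly, and the needed cancellation is spread across several values of $j$ rather than occurring term by term; keeping the identity in the cleared form $(1-xq^N)\overline S(N)=(1+xq^N)\overline S(N-1)$, so that no factor $1/(1-xq^N)$ is ever introduced, and grouping the reindexed sums according to the pair $\bigl(\overline{{N-1 \brack j-1}},\overline{{N-2 \brack j-1}}\bigr)$ they involve is, I expect, the way to make the bookkeeping tractable, possibly at the cost of iterating the recurrences once more. A fully combinatorial alternative — turning the Durfee-square dissection above into an explicit weight-preserving bijection — avoids the algebra but has its own delicate point, namely pinning down precisely where each overline is sent, especially for a part equal to $j$ that sits flush with the bottom edge of the square.
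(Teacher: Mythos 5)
Your reduction of the theorem to the recurrence $(1-xq^N)\,\overline S(N;x;q)=(1+xq^N)\,\overline S(N-1;x;q)$, together with the base case, is a legitimate strategy and your use of \eqref{pa2} in the form $\overline{{N \brack j}}-\overline{{N-1 \brack j}}=q^{N-j}\bigl(\overline{{N-1 \brack j-1}}+\overline{{N-2 \brack j-1}}\bigr)$ is correct. But the inductive step, which is the entire content of the theorem, is not actually carried out, and it is precisely where the difficulty sits. After substituting these differences on the left and reindexing $j\mapsto j+1$ on the right, the summands do not pair off: the left-hand terms carry $\frac{(-xq)_{j}}{(xq)_{j}}$ or $\frac{(-xq)_{j-1}}{(xq)_{j-1}}$ against over $q$-binomials of one shape, the right-hand terms against another, and aligning them via $\frac{(-xq)_{j}}{(xq)_{j}}=\frac{1+xq^{j}}{1-xq^{j}}\frac{(-xq)_{j-1}}{(xq)_{j-1}}$ leaves residual terms of the type $q^{j-1}\frac{2xq^{j}}{1-xq^{j}}\frac{(-xq)_{j-1}}{(xq)_{j-1}}\overline{{N-1 \brack j-1}}$, whose cancellation is not confined to a fixed $j$ and is not delivered by ``a further application of the recurrences'' as stated. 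You flag this telescoping as the expected obstacle and leave it unresolved, so what you have is a plan whose crucial cancellation is unverified; making it work would require a computation of roughly the scale of the paper's proof of Lemma \ref{recC}, not a one-line appeal to Theorem \ref{PropPascal}.

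Ironically, the Durfee-square dissection you offer only as ``motivation'' is the paper's entire proof, and it closes easily, including the point you worry about. Given an overpartition into parts at most $N$ with Durfee square of side $j$, split according to whether the bottom-right corner of the square is overlined. An overlined part equal to $j$ lying in the first $j$ rows must be the $j$-th part (overlines mark last occurrences, and any part equal to $j$ above row $j$ forces $\lambda_j=j$), and then no part equal to $j$ occurs below the square; hence the overlined-corner case leaves at most $j-1$ rows to the right of the square (generated by $\overline{{N-1 \brack j-1}}$) and parts at most $j-1$ below (generated by $\frac{(-xq)_{j-1}}{(xq)_{j-1}}$), while the non-overlined case gives $\overline{{N \brack j}}$ to the right and $\frac{(-xq)_{j}}{(xq)_{j}}$ below, the square itself contributing $x^jq^{j^2}$. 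This is a weight-preserving bijection and proves the identity directly, with no induction; had you promoted that paragraph from motivation to proof and pinned down the overline bookkeeping as above, you would have had the paper's argument.
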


\begin{proof}
Let us consider an overpartition into parts $\leq N$, generated by $\frac{(-xq)_{N}}{(xq)_{N}}$. The variable $j$ counts the size of the Durfee square of the overpartition.  The Durfee square is generated $x^jq^{j^2}$. Then either the corner at the bottom right of the Durfee square is overlined or it is not. If it is overlined, then we have an overpartition generated by $\overline{\left[ \begin{matrix} N-1 \\ j-1 \end{matrix} \right]}$ at the right of the Durfee square, and an overpartition generated by $\frac{(-xq)_{j-1}}{(xq)_{j-1}}$ under it. If it is not overlined, then we have an overpartition generated by $\overline{\left[ \begin{matrix} N \\ j \end{matrix} \right]}$ to the right of the Durfee square, and an overpartition generated by $\frac{(-xq)_{j}}{(xq)_{j}}$ under it.
These two cases correspond to the two sums in $\overline{S}(N;x;q)$.
\end{proof}

By taking a limit $j \rightarrow \infty$ and using Proposition \ref{Njlim}, we obtain the following identity.
\begin{corollary} \label{corSyl}
We have 
\[
\frac{(-xq)_{\infty}}{(xq)_{\infty}} = 1 + \sum_{j \geq 1} \left( \frac{(-q)_{j-1}}{(q)_{j-1}} \frac{(-xq)_{j-1}}{(xq)_{j-1}} x^j q^{j^2} + \frac{(-q)_{j}}{(q)_{j}} \frac{(-xq)_{j}}{(xq)_{j}} x^j q^{j^2} \right).
\]
\end{corollary}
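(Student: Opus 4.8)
The plan is to derive Corollary~\ref{corSyl} directly from the preceding theorem by letting $N\to\infty$ and invoking Proposition~\ref{Njlim}. The left-hand side is immediate: since the factor $(-xq)_N/(xq)_N$ is a power series in $q$ whose coefficients stabilize as $N$ grows (the restriction on the size of parts disappears in the limit), we have $\lim_{N\to\infty}(-xq)_N/(xq)_N=(-xq)_\infty/(xq)_\infty$ as formal power series in $q$ (with $x$ a parameter). So the real content is to pass to the limit on the right-hand side, term by term in the sum over $j$.

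The key observation is that for each fixed $j\ge 1$, the summand
\[
\overline{\left[ \begin{matrix} N-1 \\ j-1 \end{matrix} \right] } \frac{(-xq)_{j-1}}{(xq)_{j-1}} x^j q^{j^2} + \overline{\left[ \begin{matrix} N \\ j \end{matrix} \right] } \frac{(-xq)_{j}}{(xq)_{j}} x^j q^{j^2}
\]
has a well-defined limit as $N\to\infty$: by Proposition~\ref{Njlim}, $\overline{\left[ \begin{smallmatrix} N-1 \\ j-1 \end{smallmatrix}\right]}\to (-q)_{j-1}/(q)_{j-1}$ and $\overline{\left[ \begin{smallmatrix} N \\ j \end{smallmatrix}\right]}\to (-q)_{j}/(q)_{j}$, while the remaining factors do not depend on $N$. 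Substituting these limits gives exactly the $j$-th summand on the right-hand side of the claimed identity. So the statement follows once we justify interchanging the limit $N\to\infty$ with the infinite sum over $j$.

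That interchange is the only point requiring care, and it is easily handled by a degree argument in the formal power series ring $\Z[x][[q]]$ (or $\Z[[x,q]]$). Both the $j^2$ in the exponent $q^{j^2}$ and the $x^j$ ensure that, to recover the coefficient of any fixed monomial $x^a q^b$, only finitely many values of $j$ contribute — namely those with $j^2\le b$ (and $j\le a$). For each such $j$, the coefficient of $x^aq^b$ in the $j$-th summand of $\overline{S}(N;x;q)$ is eventually constant in $N$ (indeed constant once $N\ge j$, say, by Proposition~\ref{Njlim} applied at the level of coefficients — each $\overline{\left[\begin{smallmatrix} N \\ j\end{smallmatrix}\right]}$ is a polynomial whose low-order coefficients stabilize). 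Hence the coefficient of $x^aq^b$ on the right side of the theorem converges, as $N\to\infty$, to the coefficient of $x^aq^b$ in the proposed right-hand side of the corollary. Combining this with the limit of the left-hand side computed above yields the identity. I do not anticipate any genuine obstacle here; the argument is a routine ``take $N\to\infty$ coefficientwise'' passage, and the main thing to state explicitly is why the sum over $j$ is locally finite so that the limit may be taken inside.
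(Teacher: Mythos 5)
Your proposal is correct and follows exactly the paper's route: the paper obtains the corollary by letting $N\to\infty$ in the identity $\overline{S}(N;x;q)=\frac{(-xq)_N}{(xq)_N}$ and applying Proposition~\ref{Njlim} to each over $q$-binomial coefficient (the paper's "limit $j\to\infty$" is a typo for $N\to\infty$). Your extra justification of the coefficientwise interchange of limit and sum is a routine detail the paper leaves implicit.
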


In particular, by setting $x=-1$ we obtain a well known theta function identity.  
\[
\frac{(q)_{\infty}}{(-q)_{\infty}} = \sum_{n \in \mathbb{Z} } (-1)^n q^{n^2}.
\]

 As another application, we obtain the overpartition rank generating function. To explain Ramanujan's famous three partition congruences, Dyson \cite{dyson} introduced the rank for the partition as the difference between the size of the largest part and the number of parts. For an overpartition, we can define a rank in the same way \cite{BL3}. Let $\overline{N} (m,n)$ be the number of overpartitions of $n$ with rank $m$. Then, we can express the generating function in terms of over $q$-binomial coefficients.
 
\begin{theorem}
For a non-negative integer $m$, 
\begin{align*}
N_m (q) &:= \sum_{n \geq 0} \overline{N}(m,n) q^n \\
&= 2q^{1+m} + \sum_{k\geq 2} q^{2k+m-1} \left( \overline{\left[ \begin{matrix} 2k+m-2 \\ k-1 \end{matrix} \right]}+  \overline{\left[ \begin{matrix} 2k+m-3 \\ k-1 \end{matrix} \right]} + \right. \\
 &\quad\quad  \left.  \overline{\left[ \begin{matrix} 2k+m-3 \\ k-2 \end{matrix} \right]} + \overline{\left[ \begin{matrix} 2k+m-4 \\ k-2 \end{matrix} \right]} \right).
\end{align*}
\end{theorem}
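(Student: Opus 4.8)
The plan is to mimic the combinatorial argument used for Sylvester's identity above, decomposing an overpartition of $n$ with rank $m$ according to the size $k$ of its Durfee square. Fix a non-negative integer $m$; since the rank is the largest part minus the number of parts, an overpartition with non-negative rank $m$ and Durfee square of side $k$ has largest part $\geq k$ and at most $k + (\text{something})$ parts, and the Durfee square itself forces at least $k$ parts and largest part at least $k$. First I would isolate the case $k=1$: an overpartition whose Durfee square has side $1$ has largest part exactly equal to $1+(\text{number of parts})-\text{rank}$ interacting with the single-cell Durfee square; a direct check shows these contribute exactly $2q^{1+m}$ (the factor $2$ coming from whether the corner cell is overlined), matching the leading term.

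For $k \geq 2$, I would write the overpartition as the Durfee square of side $k$ (contributing $q^{k^2}$), a piece to the right of the Durfee square (an overpartition with at most $k$ parts, all of size $\leq$ largest part minus $k$), and a piece below the Durfee square (an overpartition with largest part $\leq k$). The rank condition ties together the number of parts of the bottom piece and the largest part of the right piece: if the overpartition has largest part $\lambda_1$ and $\ell$ parts, then $\lambda_1 - \ell = m$, so (largest part of right piece) $-$ (number of parts of bottom piece) $= m$. The key step is to split into four cases according to (a) whether the bottom-right corner cell of the Durfee square is overlined, and (b) whether the part of the overpartition sitting in row $k$ (equivalently, whether the first column beyond the Durfee square extends to full height $k$) — more precisely, one tracks whether the largest part strictly exceeds $k$ and whether it is overlined, and symmetrically whether the number of parts strictly exceeds $k$ and whether the smallest part is $\overline{1}$. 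Each of the four resulting configurations, after peeling off the appropriate staircase/overline bookkeeping, leaves an over $q$-binomial coefficient; the four exponents $2k+m-2$, $2k+m-3$, $2k+m-3$, $2k+m-4$ and index shifts $k-1, k-1, k-2, k-2$ should drop out of this case analysis, with the common prefactor $q^{2k+m-1}$ accounting for the Durfee square $q^{k^2}$ together with the minimal content forced on the two attached pieces by having rank exactly $m$.

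I expect the main obstacle to be getting the four cases and their bookkeeping exactly right: in the overpartition setting the ``distinct overlined parts'' interact with both the right-of-Durfee piece and the below-Durfee piece, and one must be careful not to double count the configurations where the largest part equals $k$ or the number of parts equals $k$ (the boundary between Durfee side $k$ and side $k-1$). The cleanest route is probably to translate everything into the recurrences of Theorem~\ref{PropPascal}: express the desired right-hand side as a sum over $k$ of over $q$-binomial coefficients, apply \eqref{pa1} and \eqref{pa2} to collapse the four terms in each summand, and check that the telescoped result matches the Durfee-square decomposition of $\frac{(-q)_\infty}{(q)_\infty}$ restricted to rank $m$ — i.e., verify it analytically once the combinatorial picture has told us the right shape. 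Alternatively, one can take the $N\to\infty$ specialization of a finite analogue proved by the Durfee-square dissection and invoke Proposition~\ref{Njlim}.
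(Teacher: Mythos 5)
The decomposition you propose does not work: the index $k$ in the stated formula is the number of parts of the overpartition, not the side of its Durfee square. Two quick checks show this. First, the $k$-th summand $q^{2k+m-1}(\cdots)$ has minimal $q$-degree $2k+m-1$, which is linear in $k$, while every overpartition whose Durfee square has side $k$ has weight at least $k^{2}$; for fixed $m$ and large $k$ these are incompatible, so no amount of bookkeeping can make the Durfee dissection ``drop out'' to the stated terms. Second, your $k=1$ check is false: the overpartitions of rank $m$ whose Durfee square has side $1$ include $(m+2,1)$, $(m+3,1,1),\dots$ and their overlined variants, so that case contributes infinitely many terms, whereas $2q^{1+m}$ counts exactly the overpartitions with a single part, namely $m+1$ and $\overline{m+1}$. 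There is also a structural obstacle you already sense but do not resolve: in a Durfee dissection the rank condition couples the first row of the piece to the right of the square with the number of parts of the piece below it, so each $k$ would yield a convolution (an extra summation variable) of two generating functions, as in $\overline{S}(N;x,q)$, rather than a single over $q$-binomial coefficient per term as in the statement.

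What the paper actually does is classify by the number of parts $k$, so that rank $m$ forces the largest part to equal $m+k$; one then removes the largest part and subtracts $1$ from every remaining part, also removing the part $\overline{1}$ when it occurs, which accounts exactly for the factor $q^{2k+m-1}$. The four over $q$-binomial coefficients correspond to the four cases (largest part overlined or not) $\times$ (smallest part equal to $\overline{1}$ or not): what remains is an overpartition fitting inside an $(m+k-1)\times(k-1)$, $(m+k-2)\times(k-1)$, $(m+k-1)\times(k-2)$ or $(m+k-2)\times(k-2)$ box, respectively, giving $\overline{\left[\begin{smallmatrix} 2k+m-2 \\ k-1 \end{smallmatrix}\right]}$, $\overline{\left[\begin{smallmatrix} 2k+m-3 \\ k-1 \end{smallmatrix}\right]}$, $\overline{\left[\begin{smallmatrix} 2k+m-3 \\ k-2 \end{smallmatrix}\right]}$ and $\overline{\left[\begin{smallmatrix} 2k+m-4 \\ k-2 \end{smallmatrix}\right]}$, with $k=1$ treated separately as the one-part case $2q^{1+m}$. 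Your closing suggestion to ``verify analytically'' via the recurrences of Theorem~\ref{PropPascal} or Proposition~\ref{Njlim} is not carried out and, starting from the misidentified dissection, would not rescue the argument.
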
 

\begin{proof}
If there is only one part in the overpartition, $m+1$ and $\overline{m+1}$ are the only two such overpartitions with rank $m$, which corresponds to $2q^{m+1}$. Now we assume that an overpartition has at least two parts and the rank of the overparition is $m$. Under this assumption, the largest part would be $m+k$ and the number of part is $k$, this corresponds to $q^{m+2k-1}$ inside the summation. Now the first sum counts the case where the largest part is not overlined and there is no $\overline{1}$. The second sum counts the case where the largest part is overlined and there is no $\overline{1}$.
The third sum counts the case where the largest part is not overlined and the smallest part is $\overline{1}$. The last sum corresponds to the case where the largest part is overlined and the smallest part is $\overline{1}$.
\end{proof}
 
 By comparing the known generating function for $\overline{N}_m(q)$  \cite[Proposition 3.2]{L1} 
 \[
 N_m (q) = 2 \frac{(-q)_{\infty}}{(q)_{\infty}} \sum_{n \geq 1} \frac{(-1)^{n-1} q^{n^2 + |m| n} (1-q^n)}{1+q^n},
\]
 we derive the following identity.
 
 \begin{corollary}
 For a non-negative integer $m$, 
 \begin{align*}
 &2 \frac{(-q)_{\infty}}{(q)_{\infty}} \sum_{n \geq 1} \frac{(-1)^{n-1} q^{n^2 + |m| n} (1-q^n)}{1+q^n} \\
 &=2q^{1+m} + \sum_{k\geq 2} q^{2k+m-1} \left( \overline{\left[ \begin{matrix} 2k+m-2 \\ k-1 \end{matrix} \right]}+  \overline{\left[ \begin{matrix} 2k+m-3 \\ k-1 \end{matrix} \right]}  \right. \\
 &\quad\quad\quad\quad\quad\quad  \left.  +\overline{\left[ \begin{matrix} 2k+m-3 \\ k-2 \end{matrix} \right]} + \overline{\left[ \begin{matrix} 2k+m-4 \\ k-2 \end{matrix} \right]} \right).
 \end{align*}
 \end{corollary}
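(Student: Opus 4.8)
The plan is to notice that both sides of the asserted identity are simply closed forms for one and the same power series, namely the rank generating function $N_m(q)=\sum_{n\ge 0}\overline{N}(m,n)q^n$. The right-hand side is exactly the expression produced by the Theorem that immediately precedes this corollary, obtained there by dissecting an overpartition counted by $\overline{N}(m,n)$ according to its number of parts $k$, its largest part (which must be $m+k$ once the rank is fixed to equal $m$), and the two binary choices of whether the largest part is overlined and whether the smallest part equals $\overline 1$; the two one-part overpartitions $m+1,\overline{m+1}$ supply the leading term $2q^{m+1}$. The left-hand side is the evaluation of $N_m(q)$ recorded in \cite[Proposition 3.2]{L1}, with $|m|=m$ since $m\ge 0$. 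Equating the two formulas for $N_m(q)$ is the whole proof.

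So the steps, in order, are: (i) quote the preceding Theorem to rewrite the right-hand side as $N_m(q)$; (ii) quote \cite[Proposition 3.2]{L1} to rewrite the left-hand side as $N_m(q)$; (iii) conclude by transitivity. The only thing one should check is that the combinatorial case analysis behind the Theorem is genuinely exhaustive and non-overlapping, which it is, so no real obstacle arises — the statement is a corollary in the literal sense, the juxtaposition of two already-proved formulas.

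If instead one wanted a purely $q$-theoretic derivation not routed through the combinatorial Theorem, the natural approach would be to expand each of the four over $q$-binomial coefficients by Theorem~\ref{overGthm}, interchange the orders of summation, and massage the resulting multiple sum into the Lambert-type series $\sum_{n\ge1}(-1)^{n-1}q^{n^2+mn}(1-q^n)/(1+q^n)$. The main difficulty there would be engineering the cancellation that produces the factor $(1-q^n)/(1+q^n)$, and the outcome would be less transparent than the combinatorial route; hence the short proof above is the one I would give.
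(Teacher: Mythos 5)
Your proposal is correct and coincides with the paper's own argument: the corollary is obtained precisely by equating the expression for $N_m(q)$ from the preceding theorem with the known Lambert-series form quoted from \cite[Proposition 3.2]{L1}. Nothing further is needed.
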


\section{Proof of a Rogers-Ramanujan type identity}

We first define two functions
\begin{align*}
\overline{D}(N,x;q) &:= \sum_{j \geq 0} \overline{\left[ \begin{matrix} N \\ j \end{matrix} \right]} x^j q^{j(j+1)/2}
\intertext{and}
\overline{C}(N,x;q)&:=\sum_{j \geq 0} \overline{\left[ \begin{matrix} N \\ j \end{matrix} \right]}  \frac{(xq)_{j}}{(-xq)_{j}} (-1)^j x^{2j} \left( q^{j(2j+1)}  - x q^{(j+1)(2j+1)} \right).
\end{align*}

The following observation is the key for obtaining a Rogers-Ramanujan type identity.

\begin{theorem} \label{keyobprop}
For a positive integer $N$, 
\[
\frac{(xq)_{N}}{(-xq)_{N}} \overline{D}(N,x;q) - \overline{C}(N,x;q) \in x^2 q^{N+3} \cdot \mathbb{Z}[[x,q]].
\]
\end{theorem}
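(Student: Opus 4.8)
The plan is to establish a recurrence in $N$ for the two functions $\overline{D}(N,x;q)$ and $\overline{C}(N,x;q)$, and then show that the difference
\[
E_N(x;q) := \frac{(xq)_{N}}{(-xq)_{N}} \overline{D}(N,x;q) - \overline{C}(N,x;q)
\]
satisfies a recurrence whose structure forces $E_N \in x^2 q^{N+3}\cdot\mathbb{Z}[[x,q]]$ by induction on $N$. First I would apply the two Pascal-type recurrences of Theorem~\ref{PropPascal} to $\overline{\left[ \begin{matrix} N \\ j \end{matrix} \right]}$ inside the defining sum of $\overline{D}(N,x;q)$; after shifting the summation index $j\to j+1$ in the terms that require it, this should express $\overline{D}(N,x;q)$ as a $q$-difference combination of $\overline{D}(N-1,x;q)$ and $\overline{D}(N-1,xq;q)$ (or a similarly dilated argument), mirroring the classical derivation in Andrews's proof of Rogers--Ramanujan. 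I expect a relation of the shape $\overline{D}(N,x;q) = \overline{D}(N-1,x;q) + (\text{something})\,\overline{D}(N-1,xq;q) + (\text{correction})$, where the ``correction'' comes from the extra third term that distinguishes the overpartition Pascal identity from the ordinary one.

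Next I would find the matching recurrence for $\overline{C}(N,x;q)$. Since $\overline{C}$ is built from the same over $q$-binomial coefficients $\overline{\left[ \begin{matrix} N \\ j \end{matrix} \right]}$ but with a more elaborate $j$-dependent coefficient, I would again substitute Theorem~\ref{PropPascal}, reindex, and package the result; the point is that the polynomial-in-$x$ coefficients $\frac{(xq)_j}{(-xq)_j}(-1)^jx^{2j}(q^{j(2j+1)}-xq^{(j+1)(2j+1)})$ are designed precisely so that the $\overline{C}$-recurrence has the \emph{same} homogeneous part as the $\overline{D}$-recurrence after the prefactor $\frac{(xq)_N}{(-xq)_N}$ is taken into account. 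Concretely, I would verify that $\frac{(xq)_N}{(-xq)_N} = \frac{(xq)_{N-1}}{(-xq)_{N-1}}\cdot\frac{1-xq^N}{1+xq^N}$ and check that multiplying the $\overline{D}$-recurrence by this prefactor produces, term by term, the $\overline{C}$-recurrence up to an explicit error lying in $x^2q^{N+2}\mathbb{Z}[[x,q]]$ (one power of $q$ lower than the target, which is exactly what an induction needs).

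With both recurrences in hand, subtracting gives $E_N = (\text{coefficient})\cdot E_{N-1} + (\text{coefficient})\cdot E_{N-1}\big|_{x\to xq} + R_N$, where the coefficients are units in $\mathbb{Z}[[x,q]]$ (or at least lie in $\mathbb{Z}[[x,q]]$) and $R_N \in x^2 q^{N+3}\mathbb{Z}[[x,q]]$ is the explicit residual error. Assuming inductively that $E_{N-1}\in x^2 q^{N+2}\mathbb{Z}[[x,q]]$, the first term on the right is in $x^2q^{N+2}\mathbb{Z}[[x,q]]$, which is not quite enough — so the crux is to show that the genuine $q^{N+2}$-contributions from the two $E_{N-1}$ terms cancel against each other (and against the leading part of $R_N$), leaving something in $x^2 q^{N+3}\mathbb{Z}[[x,q]]$. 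I would handle the base case $N=1$ (or a small initial range) by direct computation from the definitions, noting $\overline{\left[ \begin{matrix} 1 \\ 0 \end{matrix} \right]}=1$ and $\overline{\left[ \begin{matrix} 1 \\ 1 \end{matrix} \right]}=2q$, so that $\overline{D}(1,x;q)=1+2xq^2$ and $\overline{C}(1,x;q)$ can be read off, and one checks $E_1\in x^2q^4\mathbb{Z}[[x,q]]$ by inspection.

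\textbf{Main obstacle.} The hard part will be the bookkeeping in the cancellation step: tracking exactly which monomials $x^a q^b$ with $b = N+2$ appear in $E_{N-1} + E_{N-1}|_{x\to xq}$ weighted by the recurrence coefficients, and confirming they are killed by $R_N$. This is where the precise exponents $j(j+1)/2$, $j(2j+1)$, and $(j+1)(2j+1)$ in the definitions of $\overline{D}$ and $\overline{C}$ matter — they are presumably chosen so that the ``boundary'' terms produced when the over-Pascal recurrence is applied telescope correctly. I would organize this by writing the recurrence for $E_N$ modulo $x^2 q^{N+3}$ from the start, so that only finitely many low-order terms need to be tracked, rather than carrying full power series through the manipulation.
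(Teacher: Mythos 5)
Your overall strategy --- extract matching recurrences for $\frac{(xq)_N}{(-xq)_N}\overline{D}$ and for $\overline{C}$ from Theorem~\ref{PropPascal} and then induct on $N$ --- is indeed the paper's strategy, but your allocation of the error terms is backwards and the step you single out as the crux of the induction does not exist. Applying \eqref{pa1} inside $\overline{D}$ yields a depth-two recurrence in which \emph{both} arguments are dilated,
\[
\overline{D}(N,x;q) = (1+xq)\,\overline{D}(N-1,xq;q) + xq^{2}\,\overline{D}(N-2,xq^{2};q),
\]
and hence, after multiplying by the prefactor (peeling off its \emph{first} factors, not the last one as in your identity $\frac{(xq)_N}{(-xq)_N}=\frac{(xq)_{N-1}}{(-xq)_{N-1}}\cdot\frac{1-xq^N}{1+xq^N}$), the exact relation $\overline{g}(N,x)=(1-xq)\overline{g}(N-1,xq)+\frac{(xq)_{2}}{(-xq)_{2}}xq^{2}\overline{g}(N-2,xq^{2})$ of Lemma~\ref{recoverg}; there is no undilated $\overline{D}(N-1,x;q)$ term. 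This dilation is precisely what makes the induction trivial: since $E_{N-1}\in x^{2}q^{N+2}\mathbb{Z}[[x,q]]$ by hypothesis, substituting $x\mapsto xq$ turns the prefix $x^{2}$ into $x^{2}q^{2}$, so $(1-xq)E_{N-1}(xq)$ already lies in $x^{2}q^{N+4}\mathbb{Z}[[x,q]]$ and the $E_{N-2}(xq^{2})$ term lies even deeper. There are no ``genuine $q^{N+2}$-contributions from the two $E_{N-1}$ terms'' to cancel, and no such cancellation is available to rescue you. Consequently the residual $R_N$ must itself be shown to lie in $x^{2}q^{N+3}\mathbb{Z}[[x,q]]$ --- the \emph{same} order as the target, not ``one power of $q$ lower''. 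If, as you propose, you only prove the $\overline{C}$-recurrence up to an error in $x^{2}q^{N+2}\mathbb{Z}[[x,q]]$, the induction cannot close, and nothing in your plan repairs this.

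The second, larger issue is that the entire substance of the theorem is the statement you defer as ``bookkeeping'': proving that $\overline{C}(N,x)-(1-xq)\overline{C}(N-1,xq)-\frac{(xq)_{2}}{(-xq)_{2}}xq^{2}\overline{C}(N-2,xq^{2})$ lies in $x^{2}q^{N+3}\mathbb{Z}[[x,q]]$ (the paper's Lemma~\ref{recC}). This is done by rewriting every sum over the common factor $\frac{(xq)_{j}}{(-xq)_{j+1}}$, which produces about a dozen series, and then pairing them off using \emph{both} recurrences \eqref{pa1} and \eqref{pa2} repeatedly; each application is harmless because the correction terms in \eqref{pa2} carry a factor $q^{N-j}$ which, against the weights $x^{2j}q^{2j^{2}+\cdots}$ with $j\geq 1$, contributes only terms of order $x^{2}q^{N+3}$ or higher. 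Your proposal gives no indication of this pairing scheme and presumes a depth-one recurrence that \eqref{pa1}--\eqref{pa2} do not produce. Two smaller points: a depth-two recurrence needs two base cases, and your base-case values are off --- $\overline{{1 \brack 1}}=1$ (overpartitions in a $0\times 1$ box), not $2q$, so $\overline{D}(1,x;q)=1+xq$, not $1+2xq^{2}$.
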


By taking the limit as $N \to \infty$, we obtain the following corollary.
\begin{corollary} \label{RRcor}
We have 
\[
\frac{(xq)_{\infty}}{(-xq)_{\infty}} \overline{D}(\infty,x;q)  = \overline{C}(\infty,x;q).
\]
\end{corollary}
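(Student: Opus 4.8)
The plan is to carry out the inductive scheme of Andrews' proof of the Rogers--Ramanujan identities, with ordinary $q$-binomial coefficients replaced by over $q$-binomial coefficients. Write $\overline{{N \brack j}}$ for the over $q$-binomial coefficient with top entry $N$ and bottom entry $j$. The first step is to turn recurrence \eqref{pa1}, which reads $\overline{{N \brack j}}=\overline{{N-1 \brack j-1}}+q^{j}\overline{{N-1 \brack j}}+q^{j}\overline{{N-2 \brack j-1}}$, into a $q$-difference recurrence for $\overline{D}(N,x;q)=\sum_{j\ge0}\overline{{N \brack j}}x^{j}q^{j(j+1)/2}$. Substituting \eqref{pa1}, shifting the index by one in the two sums coming from $\overline{{N-1 \brack j-1}}$ and $\overline{{N-2 \brack j-1}}$, and absorbing the stray $q^{j}$ via $q^{j}x^{j}q^{j(j+1)/2}=(xq)^{j}q^{j(j+1)/2}$, one obtains
\[
\overline{D}(N,x;q)=(1+xq)\,\overline{D}(N-1,xq;q)+xq^{2}\,\overline{D}(N-2,xq^{2};q)
\]
for $N\ge2$, with $\overline{D}(0,x;q)=1$ and $\overline{D}(1,x;q)=1+xq$; using \eqref{pa2} instead gives the cross-check $\overline{D}(N,x;q)=(1+xq^{N})\overline{D}(N-1,x;q)+xq^{N}\overline{D}(N-2,x;q)$.

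Next I would pass to the normalized function $\overline{E}(N,x;q):=\frac{(xq)_{N}}{(-xq)_{N}}\overline{D}(N,x;q)$. From $(xq;q)_{N}=(1-xq)(xq^{2};q)_{N-1}=(1-xq)(1-xq^{2})(xq^{3};q)_{N-2}$ and the analogous factorizations of $(-xq;q)_{N}$ one reads off that replacing $x$ by $xq$ turns $\frac{(xq)_{N-1}}{(-xq)_{N-1}}$ into $\frac{(xq^{2};q)_{N-1}}{(-xq^{2};q)_{N-1}}$, and similarly under $x\mapsto xq^{2}$. Multiplying the recurrence above by $\frac{(xq)_{N}}{(-xq)_{N}}$ and redistributing the $q$-Pochhammer factors yields
\[
\overline{E}(N,x;q)=(1-xq)\,\overline{E}(N-1,xq;q)+\frac{xq^{2}(1-xq)(1-xq^{2})}{(1+xq)(1+xq^{2})}\,\overline{E}(N-2,xq^{2};q),
\]
an exact identity whose rational coefficients lie in $\mathbb{Z}[[x,q]]$ since the denominators are units there.

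The heart of the matter is to show that $\overline{C}(N,x;q)$ satisfies this same recurrence up to an error in $x^{2}q^{N+3}\cdot\mathbb{Z}[[x,q]]$. Writing $\overline{C}(N,x;q)=\sum_{j\ge0}\overline{{N \brack j}}c_{j}(x)$ with $c_{j}(x)=\frac{(xq)_{j}}{(-xq)_{j}}(-1)^{j}x^{2j}\bigl(q^{j(2j+1)}-xq^{(j+1)(2j+1)}\bigr)$, substitute \eqref{pa1} for $\overline{{N \brack j}}$ and reindex to get one sum against $\overline{{N-1 \brack j}}$ and one against $\overline{{N-2 \brack j}}$; one wants to recognize these as $(1-xq)\overline{C}(N-1,xq;q)$ and $\frac{xq^{2}(1-xq)(1-xq^{2})}{(1+xq)(1+xq^{2})}\overline{C}(N-2,xq^{2};q)$. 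Via the $q$-Pochhammer shift identities this reduces to comparing $c_{j+1}(x)+q^{j}c_{j}(x)$ with $(1-xq)c_{j}(xq)$, and $q^{j+1}c_{j+1}(x)$ with $\frac{xq^{2}(1-xq)(1-xq^{2})}{(1+xq)(1+xq^{2})}c_{j}(xq^{2})$; the quadratic exponents $j(2j+1)$ and $(j+1)(2j+1)$ are tuned precisely so that these agree up to terms which, once weighted by the over $q$-binomials and summed over the finite range $0\le j\le N$, land in $x^{2}q^{N+3}\cdot\mathbb{Z}[[x,q]]$. I expect this to be the main obstacle: one must keep exact account of the boundary terms with $j$ close to $N$ (which are exactly what prevents an identity at finite level), and a low-order check (for $N=2$ the discrepancy is $2x^{2}q^{5}+\cdots$, consistent with $x^{2}q^{N+3}$) is reassuring but does not replace the general computation.

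Finally, set $\sigma_{N}(x):=\overline{E}(N,x;q)-\overline{C}(N,x;q)$. Subtracting the exact recurrence for $\overline{E}$ from the approximate one for $\overline{C}$ gives
\[
\sigma_{N}(x)=(1-xq)\,\sigma_{N-1}(xq)+\frac{xq^{2}(1-xq)(1-xq^{2})}{(1+xq)(1+xq^{2})}\,\sigma_{N-2}(xq^{2})+E_{N}(x),\qquad E_{N}(x)\in x^{2}q^{N+3}\mathbb{Z}[[x,q]].
\]
Assuming inductively $\sigma_{N-1}\in x^{2}q^{N+2}\mathbb{Z}[[x,q]]$ and $\sigma_{N-2}\in x^{2}q^{N+1}\mathbb{Z}[[x,q]]$, the substitution $x\mapsto xq$ sends $x^{2}q^{N+2}\mathbb{Z}[[x,q]]$ into $x^{2}q^{N+4}\mathbb{Z}[[x,q]]$ and $x\mapsto xq^{2}$ sends $x^{2}q^{N+1}\mathbb{Z}[[x,q]]$ into $x^{2}q^{N+5}\mathbb{Z}[[x,q]]$; together with the extra factor $xq^{2}$ on the second term and with $E_{N}\in x^{2}q^{N+3}\mathbb{Z}[[x,q]]$, every term on the right lies in $x^{2}q^{N+3}\mathbb{Z}[[x,q]]$, so $\sigma_{N}$ does as well. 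After verifying the base cases directly from the explicit small values of $\overline{{N \brack j}}$, the induction closes and Theorem~\ref{keyobprop} follows; Corollary~\ref{RRcor} is then immediate, since by Proposition~\ref{Njlim} both $\overline{E}(N,x;q)$ and $\overline{C}(N,x;q)$ converge coefficientwise as $N\to\infty$ and their difference, lying in $x^{2}q^{N+3}\mathbb{Z}[[x,q]]$, vanishes in the limit.
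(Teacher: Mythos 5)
Your overall architecture is exactly the paper's: the recurrence you derive for $\overline{D}$ and hence for $\overline{E}(N,x;q)=\frac{(xq)_N}{(-xq)_N}\overline{D}(N,x;q)$ is precisely Lemma~\ref{recoverg}, the statement that $\overline{C}$ satisfies the same recurrence up to an error in $x^2q^{N+3}\mathbb{Z}[[x,q]]$ is Lemma~\ref{recC}, and the induction plus passage to the limit $N\to\infty$ is how the paper deduces Theorem~\ref{keyobprop} and then Corollary~\ref{RRcor}. But the heart of the matter, the analogue of Lemma~\ref{recC}, is exactly the part you do not prove: you acknowledge it as ``the main obstacle'' and offer only a sketch plus a low-order check. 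That is a genuine gap, since everything else (the $\overline{D}$ recurrence, the normalization, the induction, the limit) is routine, while Lemma~\ref{recC} is the actual content; in the paper it occupies a multi-page computation with about a dozen sums that are cancelled against each other using \emph{both} recurrences \eqref{pa1} and \eqref{pa2} applied repeatedly to the over $q$-binomial coefficients.

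Moreover, the route you sketch for that lemma cannot work as described. You propose to substitute \eqref{pa1} into $\overline{C}(N,x)=\sum_j\overline{{N\brack j}}c_j(x)$, regroup into sums against $\overline{{N-1\brack j}}$ and $\overline{{N-2\brack j}}$, and then match \emph{termwise}: $c_{j+1}(x)+q^jc_j(x)$ against $(1-xq)c_j(xq)$, and $q^{j+1}c_{j+1}(x)$ against $\frac{xq^2(1-xq)(1-xq^2)}{(1+xq)(1+xq^2)}c_j(xq^2)$. These discrepancies are independent of $N$, and each over $q$-binomial has constant term $1$; so if any termwise discrepancy were nonzero, its contribution would have a $q$-order bounded independently of $N$ and could not lie in $x^2q^{N+3}\mathbb{Z}[[x,q]]$ for all $N$, while if all discrepancies vanished the recurrence would hold exactly, contradicting your own observation of a nonzero discrepancy $2x^2q^5+\cdots$ at $N=2$. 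Indeed the second proposed matching already fails at $j=0$: the left side is $-\frac{1-xq}{1+xq}x^2(q^4-xq^7)$ while the right side is $\frac{xq^2(1-xq)(1-xq^2)(1-xq^3)}{(1+xq)(1+xq^2)}$, which differ in sign and in the leading power of $x$. The true source of the $O(x^2q^{N+3})$ error is not a termwise mismatch of the weights $c_j$ but the cost of converting between over $q$-binomials with different top entries: in the paper's proof, replacing (say) $\overline{{N-1\brack j-1}}$ by $\overline{{N-2\brack j-1}}$ via \eqref{pa2} introduces correction terms carrying a factor $q^{N-j}$, and it is these, after being weighted by $x^{2j}q^{2j^2+\cdots}$, that land in $x^2q^{N+3}\mathbb{Z}[[x,q]]$. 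So to complete your argument you would have to abandon the termwise comparison and carry out the cross-group cancellations (mixing \eqref{pa1} and \eqref{pa2}) essentially as in the paper's proof of Lemma~\ref{recC}.
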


In particular, the case $x=1$ is a Rogers-Ramanujan type identity, where we applied Lemma \ref{Njlim} to evaluate the limit. 

\begin{corollary}
We have
\[
\frac{(q)_{\infty}}{(-q)_{\infty}} \overline{D}(\infty,1 ; q) = \sum_{n \in \mathbb{Z} } (-1)^n q^{n(2n+1)}=(q,q^3,q^4;q^4)_{\infty} .
\]
\end{corollary}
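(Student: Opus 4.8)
The plan is to obtain this from Corollary~\ref{RRcor} by specializing to $x=1$ and evaluating the right-hand side in closed form. Setting $x=1$ in Corollary~\ref{RRcor} gives at once
\[
\frac{(q)_{\infty}}{(-q)_{\infty}}\,\overline{D}(\infty,1;q)=\overline{C}(\infty,1;q),
\]
so everything reduces to computing $\overline{C}(\infty,1;q)$ and recognising the result as the stated theta series and infinite product. To compute $\overline{C}(\infty,1;q)$ I would pass to the limit $N\to\infty$ termwise in the definition of $\overline{C}(N,x;q)$; this is legitimate because for each fixed $j$ the $j$-th summand has $q$-order at least $2j^2+j$ (coming from $x^{2j}q^{j(2j+1)}$), so only finitely many $j$ affect any given power of $q$, and by Proposition~\ref{Njlim} we may replace $\overline{{N\brack j}}$ by $(-q)_j/(q)_j$. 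At $x=1$ the prefactor collapses,
\[
\frac{(-q)_j}{(q)_j}\cdot\frac{(q)_j}{(-q)_j}(-1)^j=(-1)^j,
\]
the quotients from the limit of $\overline{{N\brack j}}$ and from $(xq)_j/(-xq)_j$ cancelling exactly, so that
\[
\overline{C}(\infty,1;q)=\sum_{j\ge0}(-1)^j\bigl(q^{j(2j+1)}-q^{(j+1)(2j+1)}\bigr).
\]

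Next I would fold this one-sided alternating sum into a bilateral theta series. Splitting it as $\sum_{j\ge0}(-1)^jq^{j(2j+1)}-\sum_{j\ge0}(-1)^jq^{(j+1)(2j+1)}$ and substituting $n=-(j+1)$ in the second sum — so that $(j+1)(2j+1)=n(2n+1)$ and $(-1)^j=-(-1)^n$ — turns $-\sum_{j\ge0}(-1)^jq^{(j+1)(2j+1)}$ into $\sum_{n\le-1}(-1)^nq^{n(2n+1)}$, while the first sum is $\sum_{n\ge0}(-1)^nq^{n(2n+1)}$. Hence $\overline{C}(\infty,1;q)=\sum_{n\in\mathbb{Z}}(-1)^nq^{n(2n+1)}$, which gives the first claimed equality. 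Finally, since $(-1)^nq^{n(2n+1)}=(-q)^n(q^2)^{n^2}$, Jacobi's triple product in the form $\sum_{n\in\mathbb{Z}}z^nQ^{n^2}=\prod_{m\ge1}(1-Q^{2m})(1+zQ^{2m-1})(1+z^{-1}Q^{2m-1})$ with $Q=q^2$ and $z=-q$ yields $\prod_{m\ge1}(1-q^{4m})(1-q^{4m-1})(1-q^{4m-3})=(q,q^3,q^4;q^4)_{\infty}$, completing the proof.

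The argument is short once Corollary~\ref{RRcor} is in hand, so there is no serious obstacle here; the only steps requiring a little care are the cancellation $\tfrac{(-q)_j}{(q)_j}\cdot\tfrac{(q)_j}{(-q)_j}=1$ produced by the $x=1$ specialization, the sign bookkeeping in the re-indexing $n=-(j+1)$ that merges the two one-sided sums into $\sum_{n\in\mathbb{Z}}(-1)^nq^{n(2n+1)}$, and matching the exponent $n(2n+1)=2n^2+n$ to the standard $z^nQ^{n^2}$ shape of the Jacobi triple product.
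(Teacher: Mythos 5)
Your proof is correct and is essentially the paper's own (very brief) argument: the paper likewise obtains this corollary by setting $x=1$ in Corollary~\ref{RRcor} and evaluating the limit of the over $q$-binomial coefficients via Proposition~\ref{Njlim}, with the final product form being the standard Jacobi triple product. Your write-up merely makes explicit the termwise passage to the limit, the reindexing $n=-(j+1)$ that produces the bilateral theta series, and the triple-product specialization $Q=q^2$, $z=-q$.
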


\begin{proof}[Proof of Theorem \ref{RRthm}]
After multiplying $\frac{(-q)_{\infty}}{(q)_{\infty}}$ to both sides and from the definitions, we obtain that
\[
\sum_{k=0}^{\infty} \frac{q^{k(k+1)/2} (-q)_{k}}{(q)_{k}} = \frac{(-q)_\infty}{(q^2;q^4)_{\infty}} = \frac{1}{(q,q^2,q^3;q^4)_{\infty}}.
\]
A basic partition theoretic interpretation of the above identity gives the desired result.
\end{proof}

Now we turn to proving Theorem \ref{keyobprop}. Let $\overline{g} (N,x) :=  \frac{(xq)_{N}}{(-xq)_{N}} \overline{D}(N,x)$. The key idea of the proof is that $\overline{g} (N,x)$ and $\overline{C} (N,x)$ satisfy the same recurrence (up to a high power of $q$ times a polynomial in $x$ and $q$) as follows.

\begin{lemma} \label{recoverg}
\[
\overline{g} (N,x) = (1-xq) \overline{g} (N-1,xq) + \frac{(xq)_{2}}{(-xq)_{2}} xq^2 \overline{g} (N-2,xq^2).
\]
\end{lemma}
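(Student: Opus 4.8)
The plan is to prove Lemma~\ref{recoverg} by establishing a recurrence for $\overline{D}(N,x)$ first and then conjugating by the factor $\frac{(xq)_N}{(-xq)_N}$. Recall $\overline{D}(N,x) = \sum_{j\geq 0} \overline{\left[\begin{matrix}N\\j\end{matrix}\right]} x^j q^{j(j+1)/2}$. I would apply the Pascal-type recurrence \eqref{pa1} from Theorem~\ref{PropPascal} to each over $q$-binomial coefficient $\overline{\left[\begin{matrix}N\\j\end{matrix}\right]}$, writing it (with $M=N-j$, so $M+N \to N$ in the notation of \eqref{pa1}) as $\overline{\left[\begin{matrix}N-1\\j-1\end{matrix}\right]} + q^j\overline{\left[\begin{matrix}N-1\\j\end{matrix}\right]} + q^j\overline{\left[\begin{matrix}N-2\\j-1\end{matrix}\right]}$. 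Substituting this into the sum and splitting into three pieces, in the first and third pieces I shift the summation index $j\to j+1$ so that the surviving over $q$-binomial coefficient becomes $\overline{\left[\begin{matrix}N-1\\j\end{matrix}\right]}$ (resp.\ $\overline{\left[\begin{matrix}N-2\\j\end{matrix}\right]}$). Collecting the powers of $q$ and $x$ that come out, the first piece should produce $xq\cdot\overline{D}(N-1,xq)$, the middle piece $\overline{D}(N-1,xq)$, and the third piece $xq^2\cdot x q\,\overline{D}(N-2,xq^2)$ or similar; the precise bookkeeping of the exponents $j(j+1)/2$ under the shift $j\to j+1$ (which adds $j+1$ to the exponent, i.e.\ multiplies by $q^{j+1}$, absorbed as $q$ times the $q^j$ reparametrization inside $\overline{D}(\cdot, xq)$) is the routine part. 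The upshot should be a clean identity of the shape
\[
\overline{D}(N,x) = (1+xq)\,\overline{D}(N-1,xq) + xq^2\,\overline{D}(N-2,xq^2)\cdot(\text{a small correction}),
\]
which I then match against the claimed recurrence after inserting the prefactor.

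Next I would multiply through by $\frac{(xq)_N}{(-xq)_N}$ to pass from $\overline{D}$ to $\overline{g}$. The key observation is the telescoping behavior of the ratio under $x\mapsto xq$: since $\frac{(xq)_N}{(-xq)_N} = \frac{1-xq}{1+xq}\cdot\frac{(xq^2)_{N-1}}{(-xq^2)_{N-1}}$, the factor $\frac{(xq)_{N-1}}{(-xq)_{N-1}}$ evaluated at $xq$ is exactly $\frac{(xq^2)_{N-1}}{(-xq^2)_{N-1}}$, and similarly $\frac{(xq)_{N-2}}{(-xq)_{N-2}}$ evaluated at $xq^2$ equals $\frac{(xq^3)_{N-2}}{(-xq^3)_{N-2}}$. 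Comparing these with the prefactor $\frac{(xq)_N}{(-xq)_N}$ in front of $\overline{D}(N,x)$, the $\overline{D}(N-1,xq)$ term acquires the correcting factor $\frac{(xq)_N/(-xq)_N}{(xq^2)_{N-1}/(-xq^2)_{N-1}} = \frac{1-xq}{1+xq}$, and the $\overline{D}(N-2,xq^2)$ term acquires $\frac{(xq)_N/(-xq)_N}{(xq^3)_{N-2}/(-xq^3)_{N-2}} = \frac{(xq)_2}{(-xq)_2}$. This is precisely the source of the coefficients $(1-xq)$ and $\frac{(xq)_2}{(-xq)_2}$ appearing in the statement. Tracking the extra $(1+xq)$ from the $\overline{D}$-recurrence against the $\frac{1-xq}{1+xq}$ from the prefactor conversion should collapse to $(1-xq)$, and the various stray powers of $q$ and $x$ should assemble into the $xq^2$ in the last term.

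The main obstacle I anticipate is getting the index-shift bookkeeping exactly right so that the correction terms really do vanish (or combine) rather than leaving a genuine error term — in particular, verifying that applying \eqref{pa1} is legitimate at the boundary $j=0$ and for small $N$ (where some over $q$-binomial coefficients are zero or the recurrence degenerates), and confirming that the three shifted sums recombine into honest copies of $\overline{D}(N-1,xq)$ and $\overline{D}(N-2,xq^2)$ with no leftover tail. A safe way to handle the edge cases is to check $N=1$ and $N=2$ directly against the definitions, and otherwise to note that for $N\geq 2$ every term in sight is a power series in $x$ and $q$ so the manipulations are formal identities in $\mathbb{Z}[[x,q]]$. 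Once the recurrence for $\overline{D}$ is pinned down, the passage to $\overline{g}$ is a mechanical substitution using the two telescoping identities above, so I expect the algebra there to be short and self-checking.
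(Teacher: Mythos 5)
Your proposal is correct and follows essentially the same route as the paper: apply \eqref{pa1} termwise to $\overline{D}(N,x)$, shift the summation index in the first and third pieces to obtain $\overline{D}(N,x)=(1+xq)\,\overline{D}(N-1,xq)+xq^2\,\overline{D}(N-2,xq^2)$, then multiply by $\frac{(xq)_N}{(-xq)_N}$ and telescope the prefactor exactly as you describe. The bookkeeping you leave open comes out clean --- the third piece is precisely $xq^2\,\overline{D}(N-2,xq^2)$ with no correction factor --- so the passage to $\overline{g}$ produces exactly the coefficients $(1-xq)$ and $\frac{(xq)_2}{(-xq)_2}$ of the statement.
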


\begin{lemma}
\label{recC}
\begin{equation}
\label{receq}
\overline{C} (N,x) - (1-xq) \overline{C} (N-1,xq) - \frac{(xq)_{2}}{(-xq)_{2}} xq^2 \overline{C} (N-2,xq^2) \in x^2 q^{N+3} \cdot \mathbb{Z}[[x,q]].
\end{equation}
\end{lemma}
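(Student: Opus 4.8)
The plan is to prove Lemma~\ref{recC} by a direct computation with the explicit series defining $\overline{C}(N,x;q)$, using only the over-$q$-binomial Pascal recurrences of Theorem~\ref{PropPascal} together with elementary manipulations of the rational prefactor $\frac{(xq)_j}{(-xq)_j}$. The first step is to record how the ingredients of $\overline{C}(N,x;q)$ transform under the substitutions $(N,x)\mapsto(N-1,xq)$ and $(N,x)\mapsto(N-2,xq^2)$: in the $j$-th term the quantity $x^{2j}q^{j(2j+1)}$ becomes $x^{2j}q^{2j}\cdot q^{j(2j+1)}=x^{2j}q^{j(2j+3)}$ after $x\mapsto xq$, and the factor $(xq)_j/(-xq)_j$ becomes $(xq^2)_j/(-xq^2)_j$. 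The point of the shift $x\mapsto xq$ is precisely that it realizes the "add $1$ to every part / peel off a column" operation on the underlying overpartitions, which is exactly the operation encoded by the Pascal recurrences; so one expects the $j$-th over-$q$-binomial $\overline{\left[\begin{smallmatrix}N\\ j\end{smallmatrix}\right]}$ to be resolved, via \eqref{pa1}, into the three pieces $\overline{\left[\begin{smallmatrix}N-1\\ j-1\end{smallmatrix}\right]}$, $q^j\overline{\left[\begin{smallmatrix}N-1\\ j\end{smallmatrix}\right]}$, $q^j\overline{\left[\begin{smallmatrix}N-2\\ j-1\end{smallmatrix}\right]}$, which are then matched term by term against the $j-1$, $j$, $j-1$ indexed terms of $\overline{C}(N-1,xq)$ and $\overline{C}(N-2,xq^2)$.

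Concretely, I would start from the left side of \eqref{receq}, expand $\overline{C}(N,x;q)$ as its defining sum over $j$, apply \eqref{pa1} to each $\overline{\left[\begin{smallmatrix}N\\ j\end{smallmatrix}\right]}$, and reindex the resulting "$j-1$" sums by $j\mapsto j+1$. In parallel I would expand $(1-xq)\overline{C}(N-1,xq;q)$ and $\frac{(xq)_2}{(-xq)_2}xq^2\,\overline{C}(N-2,xq^2;q)$ using the substitution rules from the previous paragraph; the scalar prefactors $(1-xq)$ and $\frac{(xq)_2}{(-xq)_2}xq^2$ are there to make the $q$-Pochhammer factors $(xq)_j/(-xq)_j$, once shifted, line up with the unshifted ones (e.g. $(1-xq)\cdot\frac{(xq^2)_j}{(-xq^2)_j}$ should be massaged toward $\frac{(xq)_{j+1}}{(-xq)_{j+1}}$ up to the sign/normalization, which is where the $(1+xq)$-type denominators get absorbed). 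Then I would subtract and check that, term by term in $j$, the coefficients cancel. The exponents of $q$ have been engineered for this: $q^{j(2j+1)}$ versus the shifted $q^{(j-1)(2j-1)}\cdot q^{(\text{stuff})}$ — one needs the identity $j(2j+1)=(j-1)(2j-1)+ (\text{the power picked up from the } q^j \text{ in Pascal plus the } q^2 \text{ from the shift plus the }xq \text{ extracted})$, and similarly for the companion term $x q^{(j+1)(2j+1)}$. The claim is that everything cancels except a boundary contribution coming from the largest index $j$ that survives in $\overline{C}(N,x;q)$ but whose shifted partners in $\overline{C}(N-1,\cdot)$, $\overline{C}(N-2,\cdot)$ have already vanished (because the over-$q$-binomial is zero when the bottom exceeds $N$); that residual term carries a factor $x^2 q^{N+3}$ (or higher), which is exactly the asserted membership in $x^2q^{N+3}\cdot\mathbb{Z}[[x,q]]$.

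I would also want to justify two small structural points used above: first, that $\overline{C}(N,x;q)\in\mathbb{Z}[[x,q]]$ in the first place — this is clear since $\overline{\left[\begin{smallmatrix}N\\ j\end{smallmatrix}\right]}\in\mathbb{Z}[q]$, $(xq)_j/(-xq)_j\in 1+xq\,\mathbb{Z}[[x,q]]$, and the remaining factors are monomials in $x,q$; second, that the reindexing is legitimate, i.e. the sum is effectively finite for each power of $q$ so term-by-term rearrangement is allowed in $\mathbb{Z}[[x,q]]$. With those in hand the verification is bookkeeping.

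The main obstacle I anticipate is not conceptual but the careful matching of the $q$-powers $q^{j(2j+1)}$ and $q^{(j+1)(2j+1)}$ against the three-term split produced by \eqref{pa1} after the $x\mapsto xq$ shift: one has to verify that the quadratic-in-$j$ exponent picked up from $j^2$-type data plus the linear contribution from the $q^j$ in Pascal plus the constant shift from $xq^2$ reassemble into exactly the exponent appearing in the shifted $\overline{C}$, with \emph{no} leftover $j$-dependent discrepancy — any error there would spread the mismatch across all $j$ rather than confining it to the boundary. A secondary nuisance is handling the rational factor $\frac{(xq)_j}{(-xq)_j}$ cleanly: the cleanest route is probably to clear it by noting $\frac{(xq)_j}{(-xq)_j}\cdot(1-xq^{?})/(1+xq^{?})$ telescopes, so that after the substitutions the three prefactors differ only by explicit linear factors $(1\pm xq^{\text{something}})$, which are absorbed by the scalars $(1-xq)$ and $\frac{(xq)_2}{(-xq)_2}xq^2$ in the statement. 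Once the exponent identity and this prefactor reconciliation are pinned down, the cancellation is forced and the boundary term's divisibility by $x^2q^{N+3}$ follows by inspecting the first index $j$ at which the shifted binomials die.
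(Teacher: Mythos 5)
Your general setup --- expand $\overline{C}$, shift $x\mapsto xq$, put everything over a common prefactor, and invoke the Pascal recurrences of Theorem~\ref{PropPascal} --- is the same as the paper's, but the load-bearing claim of your plan, namely that after applying \eqref{pa1} the term-by-term matching closes \emph{exactly} and the remainder in \eqref{receq} is a boundary artifact at the largest surviving index $j$, is not what the algebra gives; this is precisely the risk you flagged yourself. If you carry out the expansion (after rewriting all sums with the common factor $\frac{(xq)_j}{(-xq)_{j+1}}$, which is necessary before anything cancels), the recurrence \eqref{pa1} alone does not reconcile the surviving sums: for every $j\ge 1$ you are left needing to trade $\overline{\left[\begin{smallmatrix}N\\ j\end{smallmatrix}\right]}$ for $\overline{\left[\begin{smallmatrix}N-1\\ j\end{smallmatrix}\right]}$, or $\overline{\left[\begin{smallmatrix}N-2\\ j-1\end{smallmatrix}\right]}$ for $\overline{\left[\begin{smallmatrix}N-1\\ j-1\end{smallmatrix}\right]}$, and the only tool for that is the \emph{second} recurrence \eqref{pa2}, which your proposal never uses. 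Each such application of \eqref{pa2} produces correction terms carrying the factor $q^{N-j}$ (or $q^{N-j-1}$), and these corrections arise at every $j$, so the mismatch really is spread over all $j$ rather than confined to a boundary. The lemma survives not because over $q$-binomial coefficients vanish beyond the boundary, but because of an exponent count: each correction also inherits from its summand a factor $x^{\ge 2}q^{2j^2+cj+d}$, and the quadratic exponent more than compensates the subtracted $j$ (for instance $q^{N-j}\cdot x^{2j}q^{2j^2+2j}=x^{2j}q^{N+2j^2+j}\in x^2q^{N+3}\mathbb{Z}[[x,q]]$ for $j\ge 1$). That mechanism is the actual source of the bound $x^2q^{N+3}$ and it is absent from your argument.

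By contrast, a genuine boundary leftover at $j$ near $N$ would sit in $q$-degree of order $N^2$ and would be trivially admissible; appealing to it gives no route to controlling the low-$j$ discrepancies, which are exactly where terms of degree as small as $x^2q^{N+3}$ can appear. To complete a proof along your lines you would have to (i) expand the extra factors $(1\pm xq^{j+1})$ so that all sums share the prefactor $\frac{(xq)_j}{(-xq)_{j+1}}$, (ii) group the resulting sums and apply \emph{both} \eqref{pa1} and \eqref{pa2}, recording for each use of \eqref{pa2} the error terms and verifying by the exponent count above that they lie in $x^2q^{N+3}\mathbb{Z}[[x,q]]$, and (iii) cancel the two final residual sums by the shift $j\to j-1$. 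That is precisely the paper's proof of Lemma~\ref{recC}, so as written your proposal has a gap at its central step rather than constituting a complete alternative argument.
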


Theorem \ref{keyobprop} follows immediately from these two recurrences and an induction over $N$. We now need to prove these lemmas.

\begin{proof}[Proof of Lemma \ref{recoverg}]
By applying the first recurrence in Theorem \ref{PropPascal}, we find that
\begin{align*}
\overline{D}(N,x ) &= \sum_{j \geq 0} 
\( \overline{\left[ \begin{matrix} N-1 \\ j-1 \end{matrix} \right] } +q^j  \overline{\left[ \begin{matrix} N-1 \\ j \end{matrix} \right] } +q^j \overline{\left[ \begin{matrix} N-2 \\ j-1 \end{matrix} \right] } \)  x^j q^{j(j+1)/2} \\
&= \sum_{j \geq 0}  \overline{\left[ \begin{matrix} N-1 \\ j \end{matrix} \right] } x^{j+1} q^{(j+1)(j+2)/2} + \overline{D}(N-1,xq) + \sum_{j \geq 0}  \overline{\left[ \begin{matrix} N-2 \\ j \end{matrix} \right] } x^{j+1} q^{(j+1)(j+4)/2} \\
&= (1+xq) \overline{D} (N-1,xq) + xq^2 \overline{D} (N-2,xq^2 ),
\end{align*}
where we replace $j-1$ by $j$ in the first and the third sum for the second identity. After multiplying by $\frac{(xq)_{N}}{(-xq)_{N}}$ we get the desired recurrence.
\end{proof}

\begin{proof}[Proof of Lemma \ref{recC}]
We calculate each term in~\eqref{receq}. By the definition of $\overline{C}$, we find that
\[
(1-xq) \overline{C} (N-1,xq) = (1+xq) \sum_{j\geq 0} \overline{\left[ \begin{matrix} N-1 \\ j \end{matrix} \right] } \frac{(xq)_{j+1}}{(-xq)_{j+1}} (-1)^j x^{2j} \left( q^{2j^2+3j}-xq^{2j^2+5j+2}\right).
\]
Expanding and making the change of variable $j \rightarrow j-1$ in the fourth sum, we get
\begin{equation}
\label{C(N-1)}
\begin{aligned}
(1-xq) \overline{C} (N-1,xq) &= \sum_{j\geq 0} \overline{\left[ \begin{matrix} N-1 \\ j \end{matrix} \right] } \frac{(xq)_{j+1}}{(-xq)_{j+1}} (-1)^j x^{2j} q^{2j^2+3j}\\
&+\sum_{j\geq 0} \overline{\left[ \begin{matrix} N-1 \\ j \end{matrix} \right] } \frac{(xq)_{j+1}}{(-xq)_{j+1}} (-1)^{j+1} x^{2j+1} q^{2j^2+5j+2}\\
&+\sum_{j\geq 0}\overline{\left[ \begin{matrix} N-1 \\ j \end{matrix} \right] } \frac{(xq)_{j+1}}{(-xq)_{j+1}} (-1)^j x^{2j+1} q^{2j^2+3j+1}\\
&+\sum_{j\geq 1} \overline{\left[ \begin{matrix} N-1 \\ j-1 \end{matrix} \right] } \frac{(xq)_{j}}{(-xq)_{j}} (-1)^{j} x^{2j} q^{2j^2+j}.
\end{aligned}
\end{equation}
By the change of variable $j \rightarrow j-1$ in the definition of $\frac{(xq)_{2}}{(-xq)_{2}} xq^2 \overline{C} (N-2,xq^2)$, we obtain
\begin{equation}
\label{C(N-2)}
\begin{aligned}
\frac{(xq)_{2}}{(-xq)_{2}} xq^2 \overline{C} (N-2,xq^2) &= \sum_{j\geq 1} \overline{\left[ \begin{matrix} N-2 \\ j-1 \end{matrix} \right] } \frac{(xq)_{j+1}}{(-xq)_{j+1}} (-1)^{j+1} x^{2j-1} q^{2j^2+j-1} \\
&+\sum_{j\geq 1} \overline{\left[ \begin{matrix} N-2 \\ j-1 \end{matrix} \right] } \frac{(xq)_{j+1}}{(-xq)_{j+1}} (-1)^{j} x^{2j} q^{2j^2+3j}.
\end{aligned}
\end{equation}
Using the first recurrence~\eqref{pa1} on the first sum in~\eqref{C(N-1)} and the second sum in~\eqref{C(N-2)} and extracting the term $j=0$ in the first and third sums of~\eqref{C(N-1)} leads to
\begin{align}
(1-xq) &\overline{C} (N-1,xq) +\frac{(xq)_{2}}{(-xq)_{2}} xq^2 \overline{C} (N-2,xq^2) \nonumber\\
=  1-xq &+\sum_{j\geq 1} \overline{\left[ \begin{matrix} N \\ j \end{matrix} \right] } \frac{(xq)_{j+1}}{(-xq)_{j+1}} (-1)^j x^{2j} q^{2j^2+2j} \nonumber \\
&+\sum_{j\geq 1} \overline{\left[ \begin{matrix} N-1 \\ j-1 \end{matrix} \right] } \frac{(xq)_{j+1}}{(-xq)_{j+1}} (-1)^{j+1} x^{2j} q^{2j^2+2j} \nonumber\\
&+\sum_{j\geq 0} \overline{\left[ \begin{matrix} N-1 \\ j \end{matrix} \right] } \frac{(xq)_{j+1}}{(-xq)_{j+1}} (-1)^{j+1} x^{2j+1} q^{2j^2+5j+2} \label{C(N-1)+C(N-2)}
 \\
&+\sum_{j\geq 1}\overline{\left[ \begin{matrix} N-1 \\ j \end{matrix} \right] } \frac{(xq)_{j+1}}{(-xq)_{j+1}} (-1)^j x^{2j+1} q^{2j^2+3j+1}\nonumber\\
&+\sum_{j\geq 1} \overline{\left[ \begin{matrix} N-1 \\ j-1 \end{matrix} \right] } \frac{(xq)_{j}}{(-xq)_{j}} (-1)^{j} x^{2j} q^{2j^2+j}\nonumber\\
&+\sum_{j\geq 1} \overline{\left[ \begin{matrix} N-2 \\ j-1 \end{matrix} \right] } \frac{(xq)_{j+1}}{(-xq)_{j+1}} (-1)^{j+1} x^{2j-1} q^{2j^2+j-1}.\nonumber
\end{align}

Now we want to write both $\overline{C} (N,x)$ and $(1-xq) \overline{C} (N-1,xq) +\frac{(xq)_{2}}{(-xq)_{2}} xq^2 \overline{C} (N-2,xq^2)$ as sums involving the product $\frac{(xq)_{j}}{(-xq)_{j+1}}$ to be able to make cancellations. We have
\begin{equation*}
\begin{aligned}
\overline{C} (N,x) &= \sum_{j\geq 0} \overline{\left[ \begin{matrix} N \\ j \end{matrix} \right] } \frac{(xq)_{j}}{(-xq)_{j+1}} \left(1+xq^{j+1}\right) (-1)^{j} x^{2j} q^{2j^2+j}\\
&+\sum_{j\geq 0} \overline{\left[ \begin{matrix} N \\ j \end{matrix} \right] } \frac{(xq)_{j}}{(-xq)_{j+1}} \left(1+xq^{j+1}\right) (-1)^{j+1} x^{2j+1} q^{2j^2+3j+1}.
\end{aligned}
\end{equation*}
Extracting the terms $j=0$ of each sum and expanding, we get
\begin{align}
\overline{C} (N,x) &= 1-xq + \sum_{j\geq 1} \overline{\left[ \begin{matrix} N \\ j \end{matrix} \right] } \frac{(xq)_{j}}{(-xq)_{j+1}} (-1)^{j} x^{2j} q^{2j^2+j} \nonumber \\
&+ \sum_{j\geq 1} \overline{\left[ \begin{matrix} N \\ j \end{matrix} \right] } \frac{(xq)_{j}}{(-xq)_{j+1}}(-1)^{j} x^{2j+1} q^{2j^2+2j+1} \label{C(N)}
 \\
&+\sum_{j\geq 1} \overline{\left[ \begin{matrix} N \\ j \end{matrix} \right] } \frac{(xq)_{j}}{(-xq)_{j+1}} (-1)^{j+1} x^{2j+1} q^{2j^2+3j+1} \nonumber \\
&+\sum_{j\geq 1} \overline{\left[ \begin{matrix} N \\ j \end{matrix} \right] } \frac{(xq)_{j}}{(-xq)_{j+1}} (-1)^{j+1} x^{2j+2} q^{2j^2+4j+2}. \nonumber
\end{align}

Rewriting all the sums in~\eqref{C(N-1)+C(N-2)} except the third one in terms of  $\frac{(xq)_{j}}{(-xq)_{j+1}}$ leads to
\begin{align} 
(1-xq) &\overline{C} (N-1,xq) +\frac{(xq)_{2}}{(-xq)_{2}} xq^2 \overline{C} (N-2,xq^2) \\
=  1-xq &+\sum_{j\geq 1} \overline{\left[ \begin{matrix} N \\ j \end{matrix} \right] } \frac{(xq)_{j}}{(-xq)_{j+1}} (-1)^j x^{2j} q^{2j^2+2j} \nonumber \\
&+\sum_{j\geq 1} \overline{\left[ \begin{matrix} N \\ j \end{matrix} \right] } \frac{(xq)_{j}}{(-xq)_{j+1}} (-1)^{j+1} x^{2j+1} q^{2j^2+3j+1} \nonumber \\
&+\sum_{j\geq 1} \overline{\left[ \begin{matrix} N-1 \\ j-1 \end{matrix} \right] } \frac{(xq)_{j}}{(-xq)_{j+1}} (-1)^{j+1} x^{2j} q^{2j^2+2j} \nonumber \\
&+\sum_{j\geq 1} \overline{\left[ \begin{matrix} N-1 \\ j-1 \end{matrix} \right] } \frac{(xq)_{j}}{(-xq)_{j+1}} (-1)^{j} x^{2j+1} q^{2j^2+3j+1} \nonumber \\
&+\sum_{j\geq 0} \overline{\left[ \begin{matrix} N-1 \\ j \end{matrix} \right] } \frac{(xq)_{j+1}}{(-xq)_{j+1}} (-1)^{j+1} x^{2j+1} q^{2j^2+5j+2}  \label{C(N-1)+C(N-2)bis} \\
&+\sum_{j\geq 1}\overline{\left[ \begin{matrix} N-1 \\ j \end{matrix} \right] } \frac{(xq)_{j}}{(-xq)_{j+1}} (-1)^j x^{2j+1} q^{2j^2+3j+1} \nonumber \\
&+\sum_{j\geq 1}\overline{\left[ \begin{matrix} N-1 \\ j \end{matrix} \right] } \frac{(xq)_{j}}{(-xq)_{j+1}} (-1)^{j+1} x^{2j+2} q^{2j^2+4j+2} \nonumber \\
&+\sum_{j\geq 1} \overline{\left[ \begin{matrix} N-1 \\ j-1 \end{matrix} \right] } \frac{(xq)_{j}}{(-xq)_{j+1}} (-1)^{j} x^{2j} q^{2j^2+j} \nonumber \\
&+\sum_{j\geq 1} \overline{\left[ \begin{matrix} N-1 \\ j-1 \end{matrix} \right] } \frac{(xq)_{j}}{(-xq)_{j+1}} (-1)^{j} x^{2j+1} q^{2j^2+2j+1} \nonumber \\
&+\sum_{j\geq 1} \overline{\left[ \begin{matrix} N-2 \\ j-1 \end{matrix} \right] } \frac{(xq)_{j}}{(-xq)_{j+1}} (-1)^{j+1} x^{2j-1} q^{2j^2+j-1} \nonumber \\
&+\sum_{j\geq 1} \overline{\left[ \begin{matrix} N-2 \\ j-1 \end{matrix} \right] } \frac{(xq)_{j}}{(-xq)_{j+1}} (-1)^{j} x^{2j} q^{2j^2+2j}. \nonumber 
\end{align}

Subtracting~\eqref{C(N)} from~\eqref{C(N-1)+C(N-2)bis} and noting that the third sum of~\eqref{C(N)} cancels with the second sum of~\eqref{C(N-1)+C(N-2)bis} we obtain
\begin{align} 
\overline{C} &(N,x) - (1-xq) \overline{C} (N-1,xq) -\frac{(xq)_{2}}{(-xq)_{2}} xq^2 \overline{C} (N-2,xq^2)  \nonumber\\
&= \sum_{j\geq 1} \overline{\left[ \begin{matrix} N \\ j \end{matrix} \right] } \frac{(xq)_{j}}{(-xq)_{j+1}} (-1)^{j} x^{2j} q^{2j^2+j} \label{eq1}\\
&+ \sum_{j\geq 1} \overline{\left[ \begin{matrix} N \\ j \end{matrix} \right] } \frac{(xq)_{j}}{(-xq)_{j+1}}(-1)^{j} x^{2j+1} q^{2j^2+2j+1} \label{eq2}\\
&+\sum_{j\geq 1} \overline{\left[ \begin{matrix} N \\ j \end{matrix} \right] } \frac{(xq)_{j}}{(-xq)_{j+1}} (-1)^{j+1} x^{2j+2} q^{2j^2+4j+2}  \label{eq3}\\
&+\sum_{j\geq 1} \overline{\left[ \begin{matrix} N \\ j \end{matrix} \right] } \frac{(xq)_{j}}{(-xq)_{j+1}} (-1)^{j+1} x^{2j} q^{2j^2+2j} \label{eq4}\\
&+\sum_{j\geq 1} \overline{\left[ \begin{matrix} N-1 \\ j-1 \end{matrix} \right] } \frac{(xq)_{j}}{(-xq)_{j+1}} (-1)^{j} x^{2j} q^{2j^2+2j} \label{eq5}\\
&+\sum_{j\geq 1} \overline{\left[ \begin{matrix} N-1 \\ j-1 \end{matrix} \right] } \frac{(xq)_{j}}{(-xq)_{j+1}} (-1)^{j+1} x^{2j+1} q^{2j^2+3j+1} \label{eq6}\\
&+\sum_{j\geq 0} \overline{\left[ \begin{matrix} N-1 \\ j \end{matrix} \right] } \frac{(xq)_{j+1}}{(-xq)_{j+1}} (-1)^{j} x^{2j+1} q^{2j^2+5j+2} \label{eq7}\\
&+\sum_{j\geq 1}\overline{\left[ \begin{matrix} N-1 \\ j \end{matrix} \right] } \frac{(xq)_{j}}{(-xq)_{j+1}} (-1)^{j+1} x^{2j+1} q^{2j^2+3j+1} \label{eq8}\\
&+\sum_{j\geq 1}\overline{\left[ \begin{matrix} N-1 \\ j \end{matrix} \right] } \frac{(xq)_{j}}{(-xq)_{j+1}} (-1)^{j} x^{2j+2} q^{2j^2+4j+2} \label{eq9}\\
&+\sum_{j\geq 1} \overline{\left[ \begin{matrix} N-1 \\ j-1 \end{matrix} \right] } \frac{(xq)_{j}}{(-xq)_{j+1}} (-1)^{j+1} x^{2j} q^{2j^2+j} \label{eq10}\\
&+\sum_{j\geq 1} \overline{\left[ \begin{matrix} N-1 \\ j-1 \end{matrix} \right] } \frac{(xq)_{j}}{(-xq)_{j+1}} (-1)^{j+1} x^{2j+1} q^{2j^2+2j+1} \label{eq11}\\
&+\sum_{j\geq 1} \overline{\left[ \begin{matrix} N-2 \\ j-1 \end{matrix} \right] } \frac{(xq)_{j}}{(-xq)_{j+1}} (-1)^{j} x^{2j-1} q^{2j^2+j-1} \label{eq12}\\
&+\sum_{j\geq 1} \overline{\left[ \begin{matrix} N-2 \\ j-1 \end{matrix} \right] } \frac{(xq)_{j}}{(-xq)_{j+1}} (-1)^{j+1} x^{2j} q^{2j^2+2j}. \label{eq13}
\end{align}

By the second recurrence~\eqref{pa2}, we observe that the sum~\eqref{eq6} is equal to
\begin{equation}
\label{eq6'}
\sum_{j\geq 1} \overline{\left[ \begin{matrix} N-2 \\ j-1 \end{matrix} \right] } \frac{(xq)_{j}}{(-xq)_{j+1}} (-1)^{j+1} x^{2j+1} q^{2j^2+3j+1} +O\left(x^3q^{N+5}\right),
\end{equation}
where we define $f(x,q)= O (x^k q^\ell)$ to mean that $f(x,q)\in x^k q^{\ell} \mathbb{Z} [[x,q]]$.  Thus by the first recurrence~\eqref{pa1}, the sum of~\eqref{eq2},~\eqref{eq6},~\eqref{eq8} and~\eqref{eq11} is equal to $O\left(x^3q^{N+5}\right)$. Furthermore, by the second recurrence~\eqref{pa2}, the sum of~\eqref{eq3} and~\eqref{eq9} is $O\left(x^4q^{N+7}\right).$ Finally again by the second recurrence~\eqref{pa2}, the sum~\eqref{eq4} is equal to
$$\sum_{j\geq 1} \overline{\left[ \begin{matrix} N-1 \\ j \end{matrix} \right] } \frac{(xq)_{j}}{(-xq)_{j+1}} (-1)^{j+1} x^{2j} q^{2j^2+2j} + O\left(x^2 q^{N+3}\right).$$
Thus by the first recurrence~\eqref{pa1}, the sum of~\eqref{eq1},~\eqref{eq4},~\eqref{eq10} and~\eqref{eq13} is equal to $O\left(x^2q^{N+3}\right).$

Hence we are left with the following
\begin{align*} 
\overline{C} &(N,x) - (1-xq) \overline{C} (N-1,xq) -\frac{(xq)_{2}}{(-xq)_{2}} xq^2 \overline{C} (N-2,xq^2) \\
&=\sum_{j\geq 1} \overline{\left[ \begin{matrix} N-1 \\ j-1 \end{matrix} \right] } \frac{(xq)_{j}}{(-xq)_{j+1}} (-1)^{j} x^{2j} q^{2j^2+2j}\\
&+\sum_{j\geq 0} \overline{\left[ \begin{matrix} N-1 \\ j \end{matrix} \right] } \frac{(xq)_{j+1}}{(-xq)_{j+1}} (-1)^{j} x^{2j+1} q^{2j^2+5j+2}\\
&+\sum_{j\geq 1} \overline{\left[ \begin{matrix} N-2 \\ j-1 \end{matrix} \right] } \frac{(xq)_{j}}{(-xq)_{j+1}} (-1)^{j} x^{2j-1} q^{2j^2+j-1}\\
&+O\left(x^2q^{N+3}\right).
\end{align*}

By the second recurrence~\eqref{pa2}, the third sum is equal to
$$\sum_{j\geq 1} \overline{\left[ \begin{matrix} N-1 \\ j-1 \end{matrix} \right] } \frac{(xq)_{j}}{(-xq)_{j+1}} (-1)^{j} x^{2j-1} q^{2j^2+j-1} +O\left(x^3q^{N+7}\right).$$
Factorising it with the first sum we get
\begin{align*}
\overline{C} &(N,x) - (1-xq) \overline{C} (N-1,xq) -\frac{(xq)_{2}}{(-xq)_{2}} xq^2 \overline{C} (N-2,xq^2) \\
&=\sum_{j\geq 0} \overline{\left[ \begin{matrix} N-1 \\ j \end{matrix} \right] } \frac{(xq)_{j+1}}{(-xq)_{j+1}} (-1)^{j} x^{2j+1} q^{2j^2+5j+2}\\
&+\sum_{j\geq 1} \overline{\left[ \begin{matrix} N-1 \\ j-1 \end{matrix} \right] } \frac{(xq)_{j}}{(-xq)_{j}} (-1)^{j} x^{2j-1} q^{2j^2+j-1}\\
&+O\left(x^2q^{N+3}\right).
\end{align*}
Now by a simple change of variable $j \rightarrow j-1$ we see that the two sums are cancelled, and this completes the proof. 
\end{proof}

\section{Concluding Remarks}
 In this paper, we introduced a polynomial version of the overpartition generating function and its applications in the theory. The main theme is emphasizing combinatorial motivations and roles of recurrence formulas to derive results. In this sense, the finite form of the identities are the main objects of this paper. The limiting version of the identities can also be proven using well-known transformation formulas in the theory of basic hypergeometric series. We can recover Corollary~\ref{corSyl} by setting $a=xq$, $b=-q$, and $c,d \to \infty$ in the very-well-poised $_6 \phi_5$ summation \cite[(II.20)]{GR}. Corollary~\ref{RRcor} can also be proven via employing $_8 \phi_7$ summation and Heine's transformation. By setting $a=xq$, $b=-q$, and $c,d,e,f \to \infty$ in $_8 \phi_7$ summation \cite[(III.23)]{GR}, we find that
 \[
 \overline{C}(\infty,x ; q) = (xq)_{\infty} \lim_{e,f \to \infty}  {_2 \phi_1 } (e,f ; -xq ; q , xq^2/ef).
 \]
 By employing Heine's transformation \cite[(III.2)]{GR}, we can derive that the limit in the above equation is the same as
 \[
 \frac{1}{(-xq)_{\infty}} \overline{D} (\infty,x ; q).
 \]

\section*{Acknowledgement}
The authors thank Jeremy Lovejoy for the valuable discussions and comments at every stage of this paper.


\begin{thebibliography}{30}

\bibitem{Abook}
G. E. Andrews, \emph{The Theory of Partitions}, Addison--Wesley, Reading, MA, 1976; reissued: Cambridge University Press,
Cambridge, 1998.

\bibitem{A1}
G. E. Andrews, \emph{J. J. Sylvester, Johns Hopkins and partitions}, in A Century of Mathematics in America, Part I, P. Duren, ed., American Mathematical Society, Providence, RI, 1988, 21--40.

\bibitem{A2}
G. E. Andrews, \emph{On the proofs of the Rogers--Ramanujan identities}, in q-Series and Partitions, D. Stanton, ed., Springer-Verlag, New York, 1989,  1–14.

\bibitem{acko}
G.E. Andrws, S.H. Chan, B. Kim, R. Osburn, \emph{The first positive rank and crank moments for overpartitions}, preprint.

\bibitem{BL}
K. Bringmann, J. Lovejoy, \emph{Overpartitions and class numbers of binary quadratic forms}, Proc. Natl. Acad. Sci. USA \textbf{106} (2009), no. 14, 5513--5516.

\bibitem{BL2}
K. Bringmann, J. Lovejoy, \emph{Rank and congruences for overpartition pairs},  Int. J. Number Theory {\bf 4} (2008), 303--322.

 \bibitem{BL3}
K. Bringmann, J. Lovejoy, \emph{Dyson's rank, overpartitions, and weak Maass forms} Int. Math. Res. Not. (2007), rnm063.

\bibitem{CSS}
W.Y.C. Chen, D.D.M. Sang, and D.Y.H. Shi, \emph{The Rogers-Ramanujan-Gordon theorem for overpartitions}, Proc. London Math.
Soc. {\bf 106} (2013) 1371--1393.

\bibitem{LC}
S. Corteel, J. Lovejoy \emph{Overpartitions}, Trans. Amer. Math. Soc. \textbf{356} (2004), no. 4, 1623--1635.

\bibitem{dyson}
F.J. Dyson, \emph{Some guesses in the theory of partitions}, Eureka \textbf{8} (1944), 10--15.

\bibitem{GR}
G. Gasper and M. Rahman, Basic Hypergeometric Series, 2nd Edition, Cambridge Univ. Press, Cambridge, 2004.


\bibitem{L0}
J. Lovejoy, \emph{Gordon's theorem for overpartitions}, J. Combin. Theory Ser. A  {\bf 103} (2003), 393--401.

\bibitem{L1}
J. Lovejoy, \emph{Rank and conjugation for the Frobenius representation of an overpartition}
, Ann. Combin. {\bf 9} (2005) 321--334.

  \bibitem{Ma}
  K. Mahlburg, \emph{The overpartition function modulo small powers of 2}, Discrete Mathematics \textbf{286} (2004), 263\textendash 267.


\bibitem{Syl}
J. J. Sylvester, \emph{A constructive theory of partitions, arranged in three acts, an interact, and an exodion}, in The Collected Papers of J. J. Sylvester, Vol. 3, Cambridge University Press, London, 1--83; reprinted by Chelsea, New York, 1973.

  \bibitem{Tre}
   S. Treneer, \emph{Congruences for the coefficients of weakly holomoriphic modular forms}, Proc. London Math. Soc. (3) \textbf{93} (2006), 304\textendash 324.

\bibitem{Yee}
A.J. Yee, \emph{Combinatorial proofs of Ramanujan's $_1\psi_1$ summation and the $q$-Gauss summation}, J. Combin. Theory Ser. A, {\bf 105} (2004), 63--77.

\end{thebibliography}
\end{document}